\documentclass[hidelinks,onefignum,onetabnum]{siamart250211}

\usepackage{lipsum}
\usepackage{amsfonts}
\usepackage{graphicx}
\usepackage{epstopdf}
\usepackage{algorithmic}
\usepackage{bm}
\ifpdf
  \DeclareGraphicsExtensions{.eps,.pdf,.png,.jpg}
\else
  \DeclareGraphicsExtensions{.eps}
\fi

\newsiamremark{rem}{Remark}
\newsiamremark{ass}{Hypothesis}
\crefname{hypothesis}{Hypothesis}{Hypotheses}
\newsiamthm{thm}{Theorem}
\newsiamthm{claim}{Claim}
\newsiamremark{fact}{Fact}
\crefname{fact}{Fact}{Facts}

\headers{Event-Triggered Stabilisation of Desynchronisation}{L. Yang, J. Zhang, J. Lamb, V. Voon, E. Schöll and W. Lin}

\title{Event-Triggered Stabilisation of Desynchronisation in Networked Oscillatory Systems}

\author{Luan Yang\thanks{Research Institute of Intelligent Complex Systems, Fudan University, Shanghai, China, and Department of Psychiatry, University of Cambridge, Cambridge, UK
  (\email{luanyang23@m.fudan.edu.cn}).}
\and Jingdong Zhang\thanks{Department of Mathematics and I-X, Imperial College London, London, UK 
  (\email{j.zhang1@imperial.ac.uk}).}
  \and Jeroen Lamb\thanks{Department of Mathematics, Imperial College London, London, UK 
  (\email{jsw.lamb@imperial.ac.uk}).}
  \and Valerie Voon\thanks{Department of Psychiatry, University of Cambridge, Cambridge, UK 
  (\email{vv247@cam.ac.uk}).}
  \and Eckehard Schöll\thanks{Corresponding author. Technische Universität Berlin, Institut für Physik und Astronomie, 10623 Berlin, Germany, and Bernstein Center for Computational Neuroscience (BCCN) Berlin
  (\email{schoell@physik.tu-berlin.de}).}
  \and Wei Lin\thanks{School of Mathematical Sciences, Research Institute of Intelligent Complex Systems, Fudan University, Shanghai,
 China (\email{wlin@fudan.edu.cn}).} }

\usepackage{amsopn}

\ifpdf
\hypersetup{
  pdftitle={An Example Article},
  pdfauthor={D. Doe, P. T. Frank, and J. E. Smith}
}
\fi

\usepackage{subfigure}

\begin{document}

 \def\vzero{{\bm{0}}}
  \def\vz{{\bm{z}}}
  \def\vone{{\bm{1}}}
  \def\vmu{{\bm{\mu}}}
  \def\vtheta{{\bm{\theta}}}
  \def\vsigma{{\bm{\sigma}}}
  \def\vf{{\bm{f}}}
  \def\vg{{\bm{g}}}
  \def\vu{{\bm{u}}}
    \def\vh{{\bm{h}}}
  \def\vx{{\bm{x}}}
  \def\vZ{{\bm{Z}}}
  \def\vA{{\bm{A}}}
  \def\vy{{\bm{y}}}
  \def\vv{{\bm{v}}}
    \def\vp{{\bm{p}}}
  \def\vs{{\bm{s}}}
  \def\vq{{\bm{q}}}
  \def\vb{{\bm{b}}}
    \def\vX{{\bm{X}}}
  \def\vxi{{\bm{\xi}}}
  
\maketitle

\begin{abstract}
Pathological neuronal synchrony provides one practical motivation for studying sparse desynchronisation control in networked oscillatory systems, particularly in applications where actuation and communication are resource constrained, as in deep brain stimulation. Motivated by this challenge, we study how to stabilise desynchronisation in coupled oscillatory dynamical systems without continuously updated control, even when the oscillators' phase is unavailable.
We develop a general event-triggered control framework for stabilising the desynchronised state of coupled limit-cycle oscillatory dynamical systems. Our analysis establishes a unified theoretical result showing that desynchronisation can be achieved by various feedback controllers that act sparsely and depend only on an order parameter  observable. The proposed controllers admit a gradient-descent interpretation and stabilise the desynchronisation state of general phase-reduced oscillator networks. We further prove the controlled systems under event-triggered mechanisms possess a strictly positive lower bound of the inter-event dwell-time, excluding Zeno behaviour. 
To address the practical unavailability of exact phase reductions, we introduce a pseudo-phase construction that yields a computable order parameter from state measurements alone. Numerical studies on representative oscillator networks demonstrate the effectiveness and robustness of the proposed framework.
\end{abstract}

\begin{keywords}
  Oscillations, Complex systems, Desynchronisation, Phase reduction,\\Event-triggered control, Deep brain stimulation
\end{keywords}

\begin{MSCcodes}
93D15, 34C15, 93B70
\end{MSCcodes}

\section{Introduction}
 Synchronisation is a pervasive emergent phenomenon in networks of interacting dynamical units \cite{ROS01a,zhang2024machine,wu1995synchronization,wu2002synchronization,pan2019emergent,strogatz2005crowd}.  In human brain, however, excessive neuronal synchrony is closely associated with disorders such as Parkinson’s disease \cite{alberts1969cortical,lenz1994single,goldberg2004spike} and epilepsy \cite{GER20}. This has motivated growing interest in desynchronisation strategies, particularly in neuromodulation settings such as deep brain stimulation (DBS), where stimulation must operate under stringent sensing, actuation, and energy constraints \cite{2006dbs,little2014functional,tinkhauser2017beta,hammond2007pathological,wingeier2006intra,tinkhauser2017modulatory,eusebio2012does,KRO24}. From a control-theoretic perspective, these considerations lead naturally to a broader question: how to design resource-efficient feedback laws that stabilise desynchronised states in large-scale networks of coupled limit-cycle oscillators~\cite{yang2025advancements}?

Existing desynchronisation strategies for oscillatory systems include delayed feedback, adaptive control, and learning-based updates \cite{b4,b8,b14,E2016}. While these approaches have demonstrated effectiveness in specific settings, most of them are developed for particular models or control architectures and do not provide a unified framework for sparse desynchronisation control in general coupled oscillator networks. In particular, two main difficulties remain: (1) for coupled nonlinear and high-dimensional limit-cycle oscillators, it is nontrivial to construct feedback laws that both have low degree of freedom and guarantee convergence to desynchronised states beyond the classical Kuramoto setting; (2) under resource constraints, continuously updated feedback may be infeasible, so the control design must explicitly incorporate sparse communication mechanisms. Motivated by these issues, we study desynchronisation of general high-dimensional coupled oscillators under low-dimensional event-triggered feedback controllers acting directly in the original state space.

Phase provides a natural coordinate for analysing and controlling coupled oscillatory dynamics. Near a stable periodic orbit, phase reduction transforms high-dimensional oscillator models into tractable phase equations, thereby enabling feedback design in terms of collective synchrony \cite{monga2019phase}. This framework extends beyond homogeneous oscillators and applies to weakly perturbed and heterogeneous limit-cycle systems \cite{wilson2016isostable,wilson2018greater}, forming the basis of a broad class of phase-based control strategies \cite{efimov2009controlling,kiss2007engineering,wilson2014optimal}. In this work, we exploit the phase-reduced dynamics of coupled oscillators \cite{brown2004,ashwin2016mathematical,weerasinghe2019predicting} together with the associated order parameter \cite{acebron2005kuramoto} to construct static, dynamic, and mean-field feedback laws for desynchronisation. Since exact isochronous phases are typically unavailable in high-dimensional nonlinear systems, we further introduce a pseudo-phase construction that produces a computable order parameter directly from state measurements, thereby bridging the gap between phase-based analysis and feedback implementation on the original system.

In resource-constrained settings, continuously updated feedback may be infeasible because sensing, communication, and actuation must all be used sparingly. This motivates an event-triggered implementation, in which control updates are generated only at state-dependent instants rather than continuously or periodically \cite{heemels2012introduction,tabuada2007event}. For the proposed desynchronisation framework, the central analytical issue in the event-triggered setting is to prove that the inter-event times admit a strictly positive lower bound. Establishing such a dwell-time estimate is essential for excluding Zeno behaviour and ensuring the well-posedness of the triggering mechanism \cite{yang2025neural}.

This article is organized as follows: Section~\ref{sec preliminary} introduces the problem setting, notation, and phase-reduction framework for coupled limit-cycle oscillator networks. Section~\ref{sec desyn} develops order-parameter-based feedback laws for desynchronisation, including static, dynamic, and mean-field controllers. Section~\ref{sec etc} studies their event-triggered implementation and establishes positive lower bounds on the inter-event times, thereby excluding Zeno behaviour. Section~\ref{sec pseudo} introduces a pseudo-phase construction that yields computable order parameters from state measurements and connects the phase-based analysis back to the original oscillator dynamics. Section~\ref{sec exp} presents numerical experiments, and Section~\ref{sec conclusion} concludes the paper.

\section{Problem formulation and preliminaries}\label{sec preliminary}

We consider the following setting of a controlled network of coupled oscillators, 
\begin{equation}\label{eq main}
    \begin{aligned}
\dot{\vx}_j=\vf_j(\vx_j)+\sum_{k=1}^NA_{jk}\vh(\vx_k,\vx_j)+\vg(\vx_j)u_j,
    \end{aligned}
\end{equation}
where $\vx_j\in\mathbb{R}^n$, $j=1,...,N$ represents the state of an $n$-dimensional oscillator, $\vf_j$ represents the local dynamics of the oscillators, the weighted adjacency matrix $\vA=(A_{ij})\in\mathbb{R}^{n\times n}$  captures the interacting structure between oscillators, $\vh$ is the  $(i,j)$ pairwise interaction, $\vg\in\mathbb{R}^{n\times 1}$ is the actuator of controller $u_j\in\mathbb{R}$. The isolated oscillator
$\dot{\vx}_j = \vf_j(\vx),$  admits a stable $T_j$-periodic orbit with frequency $\omega_j = 2\pi/T_j$. 
Within the basin of attraction of the periodic orbit, there exists an isochronous phase function $\theta_j(\vx)$ that maps each state to a phase between $[0,2\pi]$, and an associated $T_j$-periodic phase response curve (PRC) defined by
$\vZ_j(\theta) = \partial \theta/\partial\vx_j$, satisfying $\vZ_j(\theta) \cdot \vf_j(\vx) = \omega_j$~\cite{monga2019phase}. The networked dynamics is assumed to admit a phase reduction in the sense of Hypothesis~\ref{ass reduction}.

\begin{ass}\label{ass reduction}
There exists $\varepsilon_0>0$ such that, for all $0<\varepsilon\le\varepsilon_0$, and for solutions remaining in the basin of attraction of the limit cycle $\vX$, the system admits a uniformly valid phase reduction of the form
\begin{equation}\label{eq phase reduction}
\dot{\theta}_j
=
\omega_j
+
\sum_{k=1}^N A_{jk}\,\Gamma(\theta_k,\theta_j)
+
b_j(\theta_j)\,u_j,
\end{equation}
Here the coupling function $\Gamma$ and the control gain $b_j$ are given by
\begin{equation*}
    \begin{aligned}
\Gamma(\theta_k,\theta_j)
&=
\vZ(\theta_j)\cdot
\vh\!\big(\vX(\theta_k),\vX(\theta_j)\big),\\
b_j(\theta)
&=
\vZ_j(\theta_j)\cdot \vg\big(\vX(\theta_j)\big),        
    \end{aligned}
\end{equation*}
where $\vX(\theta)$ denotes the inverse of the phase function $\theta(\vx)$ restricted on the limit cycle. The coupling function $\Gamma$ is assumed to be bounded and locally Lipschitz. Moreover, the periodic actuator gain is assumed to be nondegenerate, i.e.,
\[
|b_j(\theta)|>0
\quad
\text{for all $\theta$ and all $j$.}
\]
\end{ass}

\begin{rem}
Although the networked dynamics can be reduced to phase equations under appropriate assumptions, the accurate expression of the phase function and the corresponding PRC is generally intractable for nonlinear and high-dimensional systems. To address this issue, in Section~\ref{sec pseudo} we introduce a projection-based approach to approximate the phase variable.
\end{rem}

The degree of collective synchrony in the network is quantified by the Kuramoto order parameter.

\begin{definition}
Let $\theta_j(t)\in[0,2\pi)$ denote the phase of node $j$. The Kuramoto order parameter is defined as
\begin{equation}\label{eq:order-phase}
R(t)=\frac{1}{N}\sum_{j=1}^N e^{\mathrm{i}\theta_j(t)},
\end{equation}
We further define the synchronisation strength of the coupled system as
\[
Q(t)=|R(t)|^2.
\]
\end{definition}

The order parameter $R(t)$ measures the degree of phase coherence across the network. In particular, synchronisation corresponds to $Q(t)\to1$, whereas desynchronisation is characterised by $Q(t)\ll1$ or $Q(t)\to0$.

In this work, we focus on the \emph{phase desynchronisation problem} for network-coupled oscillators. Specifically, given the coupled dynamics \eqref{eq main} with its associated order parameter \eqref{eq:order-phase}, the objective is to design a feedback control law $u(t)$ such that
\begin{equation}\label{eq:desync-objective}
    \lim_{t\to\infty} |r(t)| = 0,
\end{equation}
thereby eliminating the collective synchrony in the network.

The control design should meet two essential requirements:  
\begin{itemize}
    \item {Efficiency:} achieve desynchronisation with event-triggered mechanism to reduce the communication cost of updating control signals.  
    \item {Generalization:} remain effective across different oscillator models and network topologies.
\end{itemize}

\section{Control laws for desynchronisation}\label{sec desyn}
{\color{black}
To devise feedback laws for the desynchronisation task and to analyse the asymptotic behaviour of the controlled phase system~\eqref{eq phase reduction}, we next consider the general phase-reduced network under heterogeneous intrinsic frequencies. 

\begin{thm}\label{thm static heterogeneous}
Consider the reduced networked phase dynamics
\begin{equation}\label{eq general phase 1}
\dot\theta_j
=
\omega_j
+
\sum_{k=1}^N A_{jk}\Gamma(\theta_k,\theta_j)
+
b(\theta_j)u_j,
~ j=1,\dots,N,
\end{equation}
where Hypothesis~\ref{ass reduction} holds. Let
$
R=\frac{1}{N}\sum_{k=1}^N e^{\mathrm{i}\theta_k},$
~$
Q=|R|^2,$
$
\frac{\partial Q}{\partial \theta_j}=\frac{2}{N}s_j,
$
with
$
s_j=\mathrm{Im}\!\bigl(Re^{-\mathrm{i}\theta_j}\bigr)
=\frac{1}{N}\sum_{k=1}^N \sin(\theta_k-\theta_j).
$
Assume that the intrinsic frequencies are heterogeneous but satisfy
$
\max_{1\le j\le N}\omega_j-\min_{1\le j\le N}\omega_j \le \Omega,
$
for some constant $\Omega>0$. Let $F_j(\theta)=\sum_{k=1}^N A_{jk}\,\Gamma(\theta_k,\theta_j)$, and
 there exists $M\ge 0$ such that 
\begin{equation}\label{eq:proj-bound}
\sum_{j=1}^N s_j\,F_j(\theta) \le  M\,\sum_{j=1}^N s_j^2.
\end{equation} 

Define the static feedback law
\begin{equation}\label{eq static law heterogeneous}
u_j
=
-\frac{\kappa s_j+\frac{\Omega}{2}\,\mathrm{sgn}(s_j)}{b(\theta_j)},
~
\kappa>M,
\end{equation}
Then, along the solutions of the controlled system~\eqref{eq general phase 1}--\eqref{eq static law heterogeneous}, we have
$
\lim_{t\to\infty}s_j(t)=0,~ j=1,\dots,N,
$ and
 the desynchronised state $Q=0$ is asymptotically stable.
\end{thm}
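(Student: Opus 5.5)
The plan is to use the synchronisation strength $Q$ itself as a Lyapunov function for the closed loop \eqref{eq general phase 1}--\eqref{eq static law heterogeneous}. Because of the discontinuous term $\mathrm{sgn}(s_j)$, solutions are taken in the Filippov sense. The computation below holds for every selection of the set-valued sign, since only $s_j\,\mathrm{sgn}(s_j)=|s_j|$ enters.

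\textbf{Derivative of $Q$.} By the chain rule and $\partial Q/\partial\theta_j=\tfrac{2}{N}s_j$, substituting the law and using $b(\theta_j)\neq 0$ from Hypothesis~\ref{ass reduction} gives
\[
\dot Q=\frac{2}{N}\Bigl[\sum_j s_j\omega_j+\sum_j s_jF_j(\theta)-\kappa\sum_j s_j^2-\frac{\Omega}{2}\sum_j|s_j|\Bigr].
\]
The key observation is that $\sum_j s_j=\frac1N\sum_{j,k}\sin(\theta_k-\theta_j)=0$ by antisymmetry. Hence for the midpoint $c=\tfrac12(\max_j\omega_j+\min_j\omega_j)$ we have
\[
\sum_j s_j\omega_j=\sum_j s_j(\omega_j-c)\le \frac{\Omega}{2}\sum_j|s_j|,
\]
because $|\omega_j-c|\le \Omega/2$. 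This heterogeneity term is therefore cancelled exactly by the sign term. Combining this with \eqref{eq:proj-bound} yields
\[
\dot Q\le -\frac{2}{N}(\kappa-M)\sum_j s_j^2\le 0 .
\]

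\textbf{Convergence $s_j\to0$.} Since $0\le Q\le1$ and $Q$ is nonincreasing, integration gives $\int_0^\infty\sum_j s_j^2\,dt\le \frac{N}{2(\kappa-M)}Q(0)<\infty$. Next, $|\dot\theta_j|$ is uniformly bounded. This follows because $\Gamma$ is bounded, $|s_j|\le1$, the sign term is bounded, and $|b|$ is continuous, periodic and nonvanishing, hence bounded below by some $b_{\min}>0$. Consequently each $s_j$ is globally Lipschitz in $t$, so $\sum_j s_j^2$ is uniformly continuous. Barbalat's lemma then gives $s_j(t)\to0$ for all $j$. Alternatively one could invoke a LaSalle invariance principle for Filippov systems.

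\textbf{Asymptotic stability of $Q=0$.} Stability is immediate, since $Q$ is a nonincreasing function vanishing exactly on the set $\{R=0\}$; thus $Q(t)\le Q(0)$ keeps trajectories near this set. For attractivity, note that $s_j=\mathrm{Im}(Re^{-\mathrm{i}\theta_j})=0$ for all $j$ forces either $R=0$ or every $\theta_j\in\{\psi,\psi+\pi\}$ with $\psi=\arg R$. In the latter case, with $m$ oscillators at $\psi$, we get $Q=((2m-N)/N)^2$, which is either $0$ or at least $1/N^2$.

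This dichotomy is the step I expect to be the main obstacle. Two-cluster configurations are genuine critical points of $Q$, so global convergence to $Q=0$ cannot come from the Lyapunov inequality alone. I would resolve it locally. On the sublevel set $\{Q<1/N^2\}$, which is forward invariant because $Q$ is nonincreasing, the only points with $s=0$ are those with $R=0$. The $\omega$-limit set of a trajectory starting there lies in $\{s=0\}\cap\{Q\le Q(0)\}\subset\{R=0\}$, and continuity of $Q$ gives $Q(t)\to0$. This establishes asymptotic stability of the desynchronised state with an explicit basin containing $\{Q<1/N^2\}$.

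Beyond that basin, I would remark that the remaining two-cluster equilibria are saddles or maxima of $Q$, and are therefore not attracting for generic initial data. Proving this would need a separate linearisation argument.
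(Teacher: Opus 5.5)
Your proof uses the same Lyapunov argument as the paper: $Q$ is the Lyapunov function, and the sign term cancels the worst-case frequency contribution. It departs from the paper at three points, and each departure strengthens the argument.

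First, the paper obtains $|\omega_j|\le\Omega/2$ by passing to the rotating frame $\tilde\theta_j=\theta_j-\bar\omega t$. You instead use $\sum_j s_j=0$ directly. This gives the same inequality and shows why the shift is harmless. The substitution gives $R$ a phase factor and shifts the arguments of $F_j$ and $b$; it works only because $Q$ is invariant under a common rotation, which is exactly $\sum_j s_j=0$.

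Second, the paper only notes that $Q$ is nonincreasing, and strictly decreasing whenever some $s_j\neq0$. That alone does not give $s_j(t)\to0$. Your integrability-plus-uniform-continuity (Barbalat) step supplies it. So does your Filippov reading of $\mathrm{sgn}(s_j)$, which the paper never addresses.

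Third, the paper ends with the dichotomy ``$R\to0$ or convergence to a two-cluster state'', plus a heuristic that heterogeneity destroys full synchrony. This does not establish the asymptotic stability the theorem asserts. Your argument does, locally and with an explicit basin: every nontrivial critical configuration has $Q=(2m-N)^2/N^2\ge1/N^2$, the set $\{Q<1/N^2\}$ is forward invariant, and the $\omega$-limit argument then forces $Q(t)\to0$. Neither argument yields global convergence to $\{R=0\}$.

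One caution, which concerns only your closing remark. The two-cluster configurations are critical points of $Q$, but they are in general not equilibria of the closed loop, since there $\dot\theta_j\in\omega_j+F_j(\theta)-\frac{\Omega}{2}[-1,1]$. The closed loop is also neither smooth at these points nor a gradient flow of $Q$. So classifying them as saddles or maxima of $Q$, or linearising, would not by itself show they are non-attracting. The theorem claims only asymptotic stability, so you can drop that remark.
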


\begin{proof}
By the definition of $Q$, one has
\begin{equation}\label{eq Qdot est 0}
\dot Q
=
\sum_{j=1}^N \frac{\partial Q}{\partial \theta_j}\dot\theta_j
=
\frac{2}{N}\sum_{j=1}^N
s_j
\left(
\omega_j+\sum_{k=1}^N A_{jk}\Gamma(\theta_k,\theta_j)+b(\theta_j)u_j
\right),
\end{equation}
Using Eq.~\eqref{eq:proj-bound} we obtain

\begin{equation}\label{eq Qdot est 1}
\dot Q
\le
\frac{2}{N}\sum_{j=1}^N s_j\omega_j
+
\frac{2M}{N}\sum_{j=1}^N s_j^2
+
\frac{2}{N}\sum_{j=1}^N s_j\,b(\theta_j)u_j.
\end{equation}

Now let
\[
\bar\omega=\frac{\max_j\omega_j+\min_j\omega_j}{2},
\]
under the change of variables
\[
\tilde\theta_j=\theta_j-\bar\omega t,
\]
the order parameter $R$ and hence $Q=|R|^2$ remain unchanged. Thus, without loss of generality, one may assume that the shifted frequencies still denoted by $\omega_j$ satisfy
\[
|\omega_j|\le \frac{\Omega}{2},
~ j=1,\dots,N,
\]
it follows that
\[
\sum_{j=1}^N s_j\omega_j
\le
\sum_{j=1}^N |s_j|\,\frac{\Omega}{2},
\]
substituting the control law~\eqref{eq static law heterogeneous} into~\eqref{eq Qdot est 1} yields
\begin{equation}\label{eq Q decay heterogeneous}
\begin{aligned}
\dot Q
&\le
\frac{2}{N}\sum_{j=1}^N |s_j|\,\frac{\Omega}{2}
+
\frac{2M}{N}\sum_{j=1}^N s_j^2
-
\frac{2}{N}\sum_{j=1}^N s_j
\left(
\kappa s_j+\frac{\Omega}{2}\mathrm{sgn}(s_j)
\right) \\
&=
-\frac{2(\kappa-M)}{N}\sum_{j=1}^N s_j^2
+
\frac{2}{N}\sum_{j=1}^N |s_j|\,\frac{\Omega}{2}
-
\frac{2}{N}\sum_{j=1}^N |s_j|\,\frac{\Omega}{2} \\
&=
-\frac{2(\kappa-M)}{N}\sum_{j=1}^N s_j^2.
\end{aligned}
\end{equation}

Therefore, if $\kappa > M$, then $\dot Q \le 0$. Since $Q \in [0,1]$, $Q$ is nonincreasing and bounded from below. Moreover, if there exists some $s_j \neq 0$, then $Q$ is strictly decreasing.

The above inequality implies that only two cases can occur: 
\begin{itemize}
    \item[(1)] $Q \to 0$, in which case the system converges to the desynchronized state;
    \item[(2)] $\dot Q = 0$ only when $s_1=\cdots=s_N=0$. Geometrically, the condition $s_j=\mathrm{Im}(R\mathrm{e}^{-\mathrm{i}\theta_j})=0$ with $R=|R|\mathrm{e}^{-\mathrm{i}\psi}$ implies that there exists $\psi$ such that $\mathrm{e}^{-\mathrm{i}\theta_j}\in\{\mathrm{e}^{\mathrm{i}\psi},\mathrm{e}^{\mathrm{i}(\psi+\pi)}\}$, namely, the phases are distributed in at most two antipodal clusters.  In particular, although the fully synchronized state also satisfies $s_j=0$, in this case we have $\dot{\theta}_j=w_j$, the heterogeneous natural frequencies generally induce phase differences and thus destroy the complete synchronisation.
\end{itemize}
 
 Therefore, the system converges either to the desynchronized state $R=0$ or to a two-cluster synchronized state.
\end{proof}

\begin{rem}\label{rem GD heterogeneous}

\textcolor{black}{Equations~\eqref{eq static law heterogeneous} follows from interpreting the desynchronisation task as
a gradient-descent procedure on the synchronisation energy 
\(
    Q = |R|^{2}.
\)
However, since $Q(t)$ depends only on the phase variables $\theta_i(t)$ and has no explicit dependence on the control input $u(t)$, its gradient with respect to $Q(t)$ vanishes identically. To capture how the control influences the evolution of  $Q(t)$, it is therefore natural to consider the derivative $\dot{Q}(t)$ instead~\cite{fradkov2002speed,fradkov2007cybernetical,selivanov2012adaptive}, which is called speed gradient method.
According to the derivative of \(Q\) in Eq.~\eqref{eq Qdot est 0}, the directional derivative of \(\dot Q\) with respect to the control \(u_j\) is
\[
    \frac{\partial \dot Q}{\partial u_j}
    = \frac{2}{N}  b(\theta_j)s_j,
\]
a steepest--descent step in \(u_j\) therefore takes the form
\[
    u_j  \propto  -\, \frac{\partial \dot Q}{\partial u_j}
    = -\, \frac{2}{N} b(\theta_j) s_j,
\]
which yields the static law}
\[
u_j\sim -\frac{\kappa s_j}{b(\theta_j)}.
\]

In the heterogeneous setting, however, the term
\(
\sum_j s_j\omega_j
\)
appears in $\dot Q$ and may prevent monotone decay of $Q$. For this reason, the additional sign term
$
-\frac{\Omega}{2}\frac{\mathrm{sgn}(s_j)}{b(\theta_j)}
$
is introduced in~\eqref{eq static law heterogeneous} to compensate for the worst-case contribution of the frequency spread. Hence~\eqref{eq static law heterogeneous} may be viewed as a speed-gradient feedback corrected by a robustness term accounting for the heterogeneity of the intrinsic frequencies.
\end{rem}

\begin{rem}\label{rem dynamic meanfield heterogeneous}
Besides the static law~\eqref{eq static law heterogeneous}, we could introduce dynamic laws in the same spirit. Motivated by filtering the instantaneous speed-gradient update through a first-order leakage, we consider
\begin{equation}\label{eq dynamic law heterogeneous}
\dot u_j
=
-\rho u_j
-
\kappa s_j,~
\rho>0,~\kappa>M.
\end{equation}

If the actuator gain is retained explicitly, the corresponding controller is 
\begin{equation}\label{eq dynamic law heterogeneous b}
\dot u_j
=
-\rho u_j
-
\kappa b(\theta_j)s_j
.
\end{equation}
For simplicity, in the following text, we mainly consider dynamic control in Eq.~\eqref{eq dynamic law heterogeneous}.

\end{rem}

}

\begin{rem}\label{rem mean field}
The distributed additive control in system~(3.1) may be impractical in settings
with low-rank actuation, where only a small number of input channels are available.
A common rank-one arrangement in Kuramoto networks is a mean-field control that
modulates both sine and cosine couplings~\cite{wang2023desynchronizing}:
\begin{equation}
    \label{eq:mf-kuramoto}
    \dot\theta_j
    = \omega_j
      + K s_j
      + C \, s_j
      + S \, p_j,
    ~ j = 1,\dots,N,
\end{equation}
where $C,S\in\mathbb{R}$ are scalar control gains shared by all oscillators, and
\[
    s_j
    = \frac{1}{N}\sum_{k=1}^N \sin(\theta_k - \theta_j),
    ~
    p_j
    = \frac{1}{N}\sum_{k=1}^N \cos(\theta_k - \theta_j),
\]
denote the local mean fields of sine and cosine interactions, respectively.

For the general oscillator network~(2.1), the mean-field control formulation is
\begin{equation}\label{eq main mean field}
\dot{\vx}_j=\vf(\vx_j)+\sum_{k=1}^NA_{jk}\vh(\vx_k,\vx_j)+\vg(\vx_j)\left(\frac{C}{N}\sum_{k=1}^N\sin(\theta_k-\theta_j)+\frac{S}{N}\sum_{k=1}^N\cos(\theta_k-\theta_j)\right),
\end{equation}
with scalar input actuator $\vg\in\mathbb{R}^{n\times1}$.
The reduced phase dynamics under
Hypothesis~\ref{ass reduction} takes the form
\begin{equation}
    \label{eq:mf-general-phase}
    \dot\theta_j
    = \omega
      + F_j(\theta)
      + b(\theta_j)\Bigl( C\, s_j + S\, p_j \Bigr),
    ~ j=1,\dots,N,
\end{equation}
where $F_j(\theta)=\sum_{k=1}^N A_{jk}\Gamma(\theta_k,\theta_j)$,
representing a rank-one actuation of the whole network.

As in Theorem~\ref{thm static heterogeneous}, the synchronisation energy $Q=|R|^2$ satisfies
\begin{equation*}
    \dot Q
    = \frac{2}{N}\sum_{j=1}^N s_j F_j(\theta)
      + C \sigma_1^b + S \sigma_2^b,
\end{equation*}
where
\begin{equation*}
    \sigma_1^b
    = \frac{2}{N}\sum_{j=1}^N b(\theta_j) s_j^2,
    ~
    \sigma_2^b
    = \frac{2}{N}\sum_{j=1}^N b(\theta_j) s_j p_j .
\end{equation*}

Due to the nondegenerate actuator gain
$b_{\min}\le b(\theta)\le b_{\max}$, it follows that
$
    \dot Q
    \le \frac{2M}{N}\sum_{j=1}^N s_j^2
       + C\sigma_1^b + S\sigma_2^b.
$
A simple mean-field static control is obtained by fixing $S\equiv 0$ and choosing
$C<0$ sufficiently large in magnitude.  
For instance, if $
        C < -\frac{M}{b_{\min}},$
then
$
    \dot Q
    \le -\frac{2|C|b_{\min}-2M}{N}\sum_{j=1}^N s_j^2 \le 0,
$
and $\dot Q=0$ only when $s_j\equiv 0$.
Hence the static mean-field feedback recovers the desynchronisation result of
Theorem~\ref{thm static heterogeneous} as a special case.

To obtain a fully dynamic and adaptive mean-field feedback control law, coefficients $C$ and $S$ must evolve
according to the gradients $\partial\dot Q/\partial C$ and
$\partial\dot Q/\partial S$, which necessarily involve the actuator gain
$b(\theta_j)$.
We devise the detailed dynamic formulation for the mean-field control as follows,
\begin{equation*}
\dot C(t)
= -\rho C(t)
- \kappa\sigma_1^b(t),
~
\dot S(t)
= -\rho S(t)
- \kappa\sigma_2^b(t),
\end{equation*}
with $\rho>0$, $\kappa>M$,~ and $M$ coming from Eq.~\eqref{eq:proj-bound}. A simpler alternative dynamic mean-field control law is given by
\begin{equation}\label{eq dynamic mean field final}
\dot C(t)
= -\rho C(t)
- \kappa\sigma_1(t),
~
\dot S(t)
= -\rho S(t)
- \kappa\sigma_2(t),
\end{equation}
where
$
    \sigma_1
    = \frac{2}{N}\sum_{j=1}^N  s_j^2,
    ~
    \sigma_2
    = \frac{2}{N}\sum_{j=1}^N s_j p_j ,
$
and in the following text, we mainly consider the dynamic mean-field control in Eq.~\eqref{eq dynamic mean field final}.

\end{rem}

\begin{rem}
Eq.~\eqref{eq:proj-bound} is employed to bound the nonlinear phase response term in the controlled vector field of $Q$. For example, for the fully connected network $A_{jk}=1/N$ with $\Gamma(\phi)=\sin\phi$ (the classical Kuramoto case), $
F_j(\theta)=\frac{1}{N}\sum_{k=1}^N \sin(\theta_k-\theta_j)=s_j,$
hence $\sum_j s_j F_j(\theta)=\sum_j s_j^2$, i.e., \eqref{eq:proj-bound} holds with \textbf{$M=1$}. Furthermore,
if~$\Gamma$ is odd and satisfies $|\Gamma(\phi)|\le L\,|\sin\phi|$ for some $L>0$, then
\eqref{eq:proj-bound} holds with $M\le L\,\max_j\sum_{k}A_{jk}$. 

To further validate the soundness of Eq.~\eqref{eq:proj-bound}, we provide numerical test in Section~\ref{sec exp}.
\end{rem}

\section{Desynchronisation with Event-triggered Control}\label{sec etc}

In the previous section, we establish the stabilization analysis for desynchronisation control designed from the order parameter. However, the proposed control laws depend on the continuous update of the state, rendering large communicational cost in real-world applications. To circumvent such issue, we introduce the event-triggered mechanism into the proposed control laws to reduce the update times in the control process. For the physical interpretation and the algorithmic coherence, we consider to design the event function with the order parameter. We begin with a simplified situation where the heterogeneous frequencies degenerate to homogeneous frequencies $\omega_j\equiv\omega$.

\begin{thm}\label{thm general system etc homo}
\textcolor{black}{For the collective dynamics of coupled oscillators~\eqref{eq main} under Hypothesis~\ref{ass reduction} and condition~\eqref{eq:proj-bound}, and the controllers devised in Eqs.~\eqref{eq static law heterogeneous},\eqref{eq dynamic law heterogeneous b}, we introduce the event-triggered mechanisms as
\begin{equation}\label{eq event}
    \begin{aligned}
        t_{k+1}=\inf\{t>t_k:|Q(t)-Q(t_k)|-\delta Q(t)=0\},\\
    \end{aligned}
\end{equation}
here $\delta>0$ is a predefined hyperparameter. 
Then the controlled system under event-triggered mechanism avoids Zeno behaviour, i.e., there always exist some constant $T>0$ such that $t_{k+1}-t_k\ge T$ for any consecutive trigger times $t_k$ and $t_{k+1}$. Specifically, for the static law $u_j(t)  = -\kappa \frac{s_j(t_k)}{b_{\max}},~t\in[t_k,t_{k+1})$, the lower bound is $T=\dfrac{1}{2M+\kappa}\log\frac{1+\delta}{1+\frac{\kappa\delta}{2(M+\kappa)}};$ for the dynamic law $u_j(t)=-\frac{\kappa}{b_{\max}}\int_{t_{k-1}}^{t_k}e^{-\rho(t_k-\tau)}s_j(\tau)\mathrm{d}\tau$,~$t\in[t_k,t_{k+1})$, the lower bound is $T=\dfrac{1}{2M+2\frac{\kappa}{\rho(1-\delta)}}\log\frac{1+\delta}{1+\delta\frac{\kappa(1+\delta)}{2(M\rho(1-\delta)+\kappa)}};$
for the dynamic mean-field law \\$C(t)=-\frac{\kappa}{b_{\max}}\int_{t_{k-1}}^{t_k}e^{-\rho(t_k-\tau)}\sigma_1(\tau)\mathrm{d}\tau,$$~$$S(t)=-\frac{\kappa}{b_{\max}}\int_{t_{k-1}}^{t_k}e^{-\rho(t_k-\tau)}\sigma_2(\tau)\mathrm{d}\tau$,~$t\in[t_k,t_{k+1}),$ the lower bound is $T=\dfrac{\log(1+\delta)}{2(M+\kappa/\rho)}.$ }

\end{thm}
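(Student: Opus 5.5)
The approach is a comparison argument on $Q$ over one inter-event interval $[t_k,t_{k+1})$. Between events the control is frozen, so the task is to bound $|\dot Q|$ by a multiple of $Q$ (or of $\sum_j s_j^2$, which is controlled by $Q$). Integrating that linear differential inequality will show that $|Q(t)-Q(t_k)|$ cannot reach $\delta Q(t)$ before some explicit time $T$. The homogeneous frequency $\omega$ is first removed by the rotating-frame change of variables used in the proof of Theorem~\ref{thm static heterogeneous}, so that $\dot Q=\frac{2}{N}\sum_j s_j\bigl(F_j+b(\theta_j)u_j\bigr)$.

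\textbf{Step 1: relate the $s_j$ to $Q$.} Since $\sum_j s_j^2=\sum_j(\mathrm{Im}\,Re^{-\mathrm{i}\theta_j})^2\le N|R|^2=NQ$, we have $\frac{2}{N}\sum_j s_j^2\le 2Q$. Together with \eqref{eq:proj-bound}, the coupling contribution to $\dot Q$ is bounded above by $2MQ$. For the lower bound I would use the same $M$ symmetrically, i.e.\ assume $|\sum_j s_jF_j|\le M\sum_j s_j^2$, which is the form the stated constants suggest. The control contribution is bounded by Cauchy--Schwarz and $|b|\le b_{\max}$: for the static law it is at most $\frac{2\kappa}{N}\sqrt{\sum_j s_j(t)^2}\sqrt{\sum_j s_j(t_k)^2}\le 2\kappa\sqrt{Q(t)Q(t_k)}$. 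Because $(1-\delta)Q(t)\le Q(t_k)\le(1+\delta)Q(t)$ holds until the next trigger, this gives $|\dot Q|\le c\,Q$ on the interval with $c$ of order $2M+\kappa$.

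\textbf{Step 2: the dynamic laws.} The held input is an integral of past $s_j$ over $[t_{k-1},t_k]$. Its size is bounded by $\frac{\kappa}{\rho b_{\max}}\sup|s_j|$ on the previous interval. The triggering rule keeps $Q$ on that interval within the factor $(1\pm\delta)$ of $Q(t_k)$, which explains the $\kappa/(\rho(1-\delta))$ term. For the mean-field law, $|\sigma_1|,|\sigma_2|\le 2Q$-type bounds together with $|p_j|\le|R|$ give the cleaner constant $2(M+\kappa/\rho)$.

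\textbf{Step 3: integrate the differential inequality.} From $\dot Q\le cQ$ (and $\dot Q\ge -c'Q$) we obtain $Q(t)\le Q(t_k)e^{c(t-t_k)}$. The event $|Q(t)-Q(t_k)|=\delta Q(t)$ requires either $Q(t)\ge Q(t_k)/(1-\delta)$ or $Q(t)\le Q(t_k)/(1+\delta)$. Solving the corresponding exponential inequality gives $t-t_k\ge\frac{1}{c}\log(\cdot)$. For the mean-field law this is directly $\log(1+\delta)/\bigl(2(M+\kappa/\rho)\bigr)$. For the static and dynamic laws, the denominators $1+\frac{\kappa\delta}{2(M+\kappa)}$ arise from a refinement: the frozen control error $s_j(t)-s_j(t_k)$ is treated as an affine term, giving an inequality of the form $\dot Q\le aQ+bQ(t_k)$. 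Integrating this yields a ratio of the form $(1+\delta)/(1+\beta\delta)$.

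\textbf{Main obstacle.} Matching these exact constants is the delicate part. It requires the precise split of $\dot Q$ into a part proportional to $Q(t)$ and a part proportional to $Q(t_k)$, and then the Gronwall/variation-of-constants step with the correct signs. I would first derive $\dot Q\le(2M+\kappa)Q-\kappa'Q(t_k)$ and solve it explicitly. The positivity of $T$ is robust to this bookkeeping, since any constant $c$ suffices. A second subtlety is the degenerate case $Q(t_k)=0$: there the trigger condition reads $Q(t)(1-\delta)=0$, so no event fires, and the interval length is trivially bounded below.
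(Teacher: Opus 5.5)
Your comparison strategy on $[t_k,t_{k+1})$ is sound and is essentially the paper's. The paper differentiates $z=|e_Q|/Q$, with $e_Q=Q(t_k)-Q(t)$, and compares with $\dot z=(2(M+\kappa)+\kappa z)(1+z)$; in the decreasing case $z=Q(t_k)/Q-1$ this is exactly a direct comparison for $Q$. You are also right that the one-sided condition \eqref{eq:proj-bound} cannot bound $|\dot Q|$; the paper tacitly uses the two-sided version, as you do. For the mean-field law, your Step~3 with $c=c'=2(M+\kappa/\rho)$ gives exactly $\log(1+\delta)/(2(M+\kappa/\rho))$, because the decreasing event $Q=Q(t_k)/(1+\delta)$ is the binding one. (That $c$ comes, as in the paper, from $Q(\tau)\le 1$ inside the integral and $|\sigma^b_{1,2}(t)|\le b_{\max}Q(t)$.)

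The gap is in the static and dynamic laws: your plan does not produce their explicit constants. Steps~1 and~3 absorb $Q(t_k)$ into $Q(t)$ through the window $Q(t_k)\le(1+\delta)Q(t)$. This gives $c=2M+2\kappa\sqrt{1+\delta}$ (not ``of order $2M+\kappa$'') and $T=\log(1+\delta)/c$, which is strictly smaller than the claimed value: for $M=0$, $\kappa=1$, $\delta=1/2$ it is about $0.166$ versus $0.182$. So this excludes Zeno behaviour but does not prove the theorem as stated. The sketched refinement also points the wrong way. You aim for an upper bound $\dot Q\le aQ+bQ(t_k)$ (later even with a minus sign), but integrating an upper bound only controls the upward event $Q=Q(t_k)/(1-\delta)$. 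That yields a ratio $(1-\beta\delta)/(1-\delta)$ rather than $(1+\delta)/(1+\beta\delta)$, and that event is never the binding one. Nor does the constant come from an affine treatment of $s_j(t)-s_j(t_k)$, since the trigger gives no control over that difference.

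The missing step is to keep $Q(t_k)$ as a frozen forcing term. The estimate $|s_j(t)||s_j(t_k)|\le|R(t)||R(t_k)|\le\frac12\bigl(Q(t)+Q(t_k)\bigr)$ gives
\[
|\dot Q|\le(2M+\kappa)Q(t)+\kappa Q(t_k).
\]
Then $y=Q+\frac{\kappa}{2M+\kappa}Q(t_k)$ satisfies $\dot y\ge-(2M+\kappa)y$. Hence $Q$ cannot fall to $Q(t_k)/(1+\delta)$ before time $\frac{1}{2M+\kappa}\log\frac{1+\delta}{1+\kappa\delta/(2(M+\kappa))}$, which is exactly the stated $T$; the upward event takes at least as long.

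For the dynamic law, use the same split with $Q(\tau)\le\frac{1+\delta}{1-\delta}Q(t_k)$ on $[t_{k-1},t_k)$; the window factor is $\frac{1+\delta}{1-\delta}$, not $1\pm\delta$. This gives $|\dot Q|\le(2M+\kappa/\rho)Q+\frac{\kappa(1+\delta)}{\rho(1-\delta)}Q(t_k)$. It yields the stated logarithm with prefactor $1/(2M+\kappa/\rho)$, which dominates the paper's more conservative $1/\bigl(2M+2\kappa/(\rho(1-\delta))\bigr)$.

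Finally, your remark on $Q(t_k)=0$ is incorrect. Then all $s_j(t_k)=0$ and the static held control vanishes. If $\{R=0\}$ is invariant (e.g.\ a splay state under all-to-all Kuramoto coupling), one has $Q\equiv0$, so the trigger condition holds for every $t>t_k$ and $t_{k+1}=t_k$. You must assume $Q>0$ along trajectories, as the paper implicitly does by dividing by $Q$.
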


\begin{proof}

From the event condition~\eqref{eq event}, the triggering occurs when $\frac{|Q(t)-Q(t_k)|}{Q(t)}$ goes from $0$ to $\delta$. Therefore, we come to deduce the dynamics of $\frac{|Q(t)-Q(t_k)|}{Q(t)}$. To simplify the analysis, we introduce the error state $e_Q(t)=Q(t_k)-Q(t)$ with the update law $e_Q(t_k)=0$. Using Eq.~\eqref{eq Qdot est 1} we have 
\begin{equation}\label{eq homo etc proof 0}
\begin{aligned}
    \dfrac{\mathrm{d}}{\mathrm{d}t}\dfrac{|e_Q|}{Q}&=\dfrac{\mathrm{d}}{\mathrm{d}t}\dfrac{(e_Q^\top e_Q)^{1/2}}{(Q^\top Q)^{1/2}}\\
    &=\dfrac{\frac{1}{2}(e_Q^\top e_Q)^{-1/2}2e_Q^\top\dot{e}_Q(Q^\top Q)^{1/2}}{Q^\top Q}\\
    &-\dfrac{\frac{1}{2}(Q^\top Q)^{-1/2}2Q^\top\dot{Q}(e_Q^\top e_Q)^{1/2}}{Q^\top Q}\\
    &=\dfrac{e_Q^\top\dot{e}_Q}{|e_Q|Q}-\dfrac{Q^\top\dot{Q}}{QQ}\dfrac{|e_Q|}{Q}\\
    &=-\dfrac{e_Q^\top\dot{Q}}{|e_Q|Q}-\dfrac{Q^\top\dot{Q}}{QQ}\dfrac{|e_Q|}{Q}\\
    &\le\dfrac{|e_Q||\dot{Q}|}{|e_Q|Q}+\dfrac{Q|\dot{Q}|}{QQ}\dfrac{|e_Q|}{Q}\\
    &=\dfrac{|\dot{Q}|}{Q}\left(1+\dfrac{|e_Q|}{Q}\right)\\
    &\le\dfrac{\Big|\frac{2}{N}\sum_{j=1}^N s_j\Big(\omega_j+Ms_j+b(\theta_j)u_j(t_k)\Big)\Big|}{Q}\left(1+\dfrac{|e_Q|}{Q}\right).
\end{aligned}
\end{equation}

Similarly to the proof in Theorem~\ref{thm static heterogeneous}, we can remove the common frequency $\omega_j$ here by variable substitution. Then, for the static law $u_j(t_k)=-\kappa s_j(t_k)$ we have

\begin{equation}\label{eq kura etc proof 1}
\dfrac{\mathrm{d}}{\mathrm{d}t}\dfrac{|e_Q|}{Q}\le     \dfrac{\Big|\frac{2}{N}\sum_{j=1}^N s_j\Big(Ms_j-\kappa b(\theta_j)s_j(t_k)/b_{\max}\Big)\Big|}{Q}\left(1+\dfrac{|e_Q|}{Q}\right).
\end{equation}

Since $s_j=\text{Im}(Re^{-\text{i}\theta_j})\le|R|$ and $b(\theta_j)/b_{\max}\le1$ we have

\begin{equation*}
\begin{aligned}
\dfrac{\mathrm{d}}{\mathrm{d}t}\dfrac{|e_Q|}{Q}&\le     \dfrac{\Big|\frac{2}{N}\sum_{j=1}^N \Big(MQ+\kappa\frac{Q+Q(t_k)}{2}\Big)\Big|}{Q}\left(1+\dfrac{|e_Q|}{Q}\right)\\
&=    \dfrac{\Big|\frac{2}{N}\sum_{j=1}^N \Big(MQ+\kappa\frac{Q+Q+e_Q}{2}\Big)\Big|}{Q}\left(1+\dfrac{|e_Q|}{Q}\right)\\
&=\dfrac{\Big|\Big(2(M+\kappa)Q+\kappa e_Q\Big)\Big|}{Q}\left(1+\dfrac{|e_Q|}{Q}\right)\\
&\le \left(2(M+\kappa)+\kappa\dfrac{|e_Q|}{Q}\right)\left(1+\dfrac{|e_Q|}{Q}\right).
\end{aligned}
\end{equation*}

By comparison principle, we have the triggering time of $\frac{|e_Q|}{Q}$ happens after the variable  $z$ increases from $0$ to $\delta$, where the dynamic of $z$ is 
\begin{equation*}
\begin{aligned}
      \dot{z}& = \left(2(M+\kappa)+\kappa z\right)\left(1+z\right)\\
      z_0&=0,~z_T= \delta.
\end{aligned}
\end{equation*}

We have $
    \frac{\mathrm{d}z}{(1+az)(1+z)}=b\mathrm{d}t,$
where $a=\frac{\kappa}{2(M+\kappa)}$, $b=2(M+\kappa)$.
Then we have 
\begin{equation*}
\begin{aligned}
    \dfrac{\mathrm{d}z}{(1+az)(1+z)}&=\dfrac{a}{a-1}\left(\dfrac{1}{1+az}-\dfrac{1}{a(1+z)}\right)\mathrm{d}z\\
    &=\dfrac{1}{a-1}\left(\mathrm{d}\log(1+az)-\mathrm{d}\log(1+z)\right)=b\mathrm{d}t.
\end{aligned}
\end{equation*}

By integrating the above equation, the time $T$ when $z$ achieves $\delta$ satisfies,
\begin{equation*}
    \begin{aligned}
        &\dfrac{1}{a-1}\left(\log(1+a\delta)-\log(1+\delta)\right)=bT\\
        &\to T=\dfrac{1}{b(a-1)}\log\left(\dfrac{1+a\delta}{1+\delta}\right)\\
        &=\dfrac{1}{b(1-a)}\log\left(\dfrac{1+\delta}{1+a\delta}\right)\\
        &=\dfrac{1}{2M+\kappa}\log\frac{1+\delta}{1+\frac{\kappa\delta}{2(M+\kappa)}}.
    \end{aligned}
\end{equation*}

For the dynamic law, the controller $u_j(t_k)$ implemented to the system is calculated by the differential equation
$
 \dot u_j(t) = - \rho u_j(t)-\kappa  s_j(t),~t\in[t_{k-1},t_k),$
whose initial value is reset as $0$ from the last controller $u_j(t_{k-1})$.
Then we have 
\begin{equation*}
\begin{aligned}
 u_j(t_k)=-\kappa\int_{t_{k-1}}^{t_k}e^{-\rho(t_k-\tau)}s_j(\tau)\mathrm{d}\tau.
\end{aligned}
\end{equation*}

To control the term $b(\theta_j)$ in Eq.~\eqref{eq homo etc proof 0}, we add the multiplicative factor $1/b_{\max}$ to the above controller. 

Therefore, for $s_j(t)b(\theta_j)u_j(t_k)$ we have 
\begin{equation}\label{eq kura etc proof 2}
\begin{aligned}
    s_j(t)b(\theta_j)u_j(t_k)&\le \kappa\frac{b(\theta_j)}{b_{\max}}\int_{t_{k-1}}^{t_k}e^{-\rho(t_k-\tau)}|s_j(\tau)||s_j(t)|\mathrm{d}\tau\\
    &\le \kappa\int_{t_{k-1}}^{t_k}e^{-\rho(t_k-\tau)}|R(\tau)||R(t)|\mathrm{d}\tau\\
    &\le \kappa\int_{t_{k-1}}^{t_k}e^{-\rho(t_k-\tau)}\frac{Q(\tau)+Q(t)}{2}\mathrm{d}\tau\\
\end{aligned}
\end{equation}

According to the event condition we know that 

\begin{equation*}
    \begin{aligned}
        |Q(t_k)-Q(t_{k-1})|&=\delta Q(t_k),\\
Q(t)-Q(t_{k-1})|&\le\delta Q(t),~t\in[t_{k-1},t_{k}),
    \end{aligned}
\end{equation*}
which implies,
\begin{equation*}
    \begin{aligned}
        Q(t_{k-1})&\le(1+\delta) Q(t_k),\\
Q(t)&\le\frac{1}{1-\delta} Q(t)\le\frac{1+\delta}{1-\delta}Q(t_k),~t\in[t_{k-1},t_{k}).
    \end{aligned}
\end{equation*}

Substituting the above results into Eq.~\eqref{eq kura etc proof 2} we obtain,
\begin{equation}\label{eq general homo dynamic 2}
\begin{aligned}
   s_j(t)b(\theta_j)u_j(t_k)
    &\le \kappa\int_{t_{k-1}}^{t_k}e^{-\rho(t_k-\tau)}\frac{\frac{1+\delta}{1-\delta}Q(t_k)+Q(t)}{2}\mathrm{d}\tau\\
    &= \kappa\int_{t_{k-1}}^{t_k}e^{-\rho(t_k-\tau)}\frac{\frac{1+\delta}{1-\delta}(e_Q+Q(t))+Q(t)}{2}\mathrm{d}\tau\\
     &\le \frac{\kappa}{2\rho}\left(\frac{1+\delta}{1-\delta}|e_Q|+\frac{2}{1-\delta}Q\right).\\    
\end{aligned}
\end{equation}

Substituting the above inequality into $\dot{Q}$ like Eq.~\eqref{eq kura etc proof 1} we have,

\begin{equation*}
\begin{aligned}
 \dfrac{\mathrm{d}}{\mathrm{d}t}\dfrac{|e_Q|}{Q}
 &\le     \dfrac{\Big|\Big(2MQ+\frac{\kappa}{\rho}\left(\frac{1+\delta}{1-\delta}|e_Q|+\frac{2}{1-\delta}Q\right)\Big)\Big|}{Q}\left(1+\dfrac{|e_Q|}{Q}\right)\\
 &=\left(2M+\frac{2\kappa}{\rho(1-\delta)}+\frac{\kappa(1+\delta)}{\rho(1-\delta)}\frac{|e_Q|}{Q}\right)\left(1+\dfrac{|e_Q|}{Q}\right).
\end{aligned}
\end{equation*}

By comparison principle we obtain the lower bound of the time that $\frac{|e_Q|}{Q}$ goes from $0$ to $\delta$ as
\begin{equation*}
    t_{k+1}-t_k\ge \dfrac{1}{2M+2\frac{\kappa}{\rho(1-\delta)}}\log\frac{1+\delta}{1+\delta\frac{\kappa(1+\delta)}{2(M\rho(1-\delta)+\kappa)}},~\forall k.
\end{equation*}

For dynamic mean-field control, similarly we have 
\begin{equation*}
\dfrac{\mathrm{d}}{\mathrm{d}t}\dfrac{|e_Q|}{Q}\le     \dfrac{\Big|2MQ+C(t_k)\sigma^b_1+S(t_k)\sigma_2^b\Big|}{Q}\left(1+\dfrac{|e_Q|}{Q}\right).
\end{equation*}

We note that 
\begin{equation*}
    \begin{aligned}
\sigma_1&=\frac{1}{N}\sum_{j=1}^Ns_j^2\le Q,\\
 \sigma_1^b&=\frac{1}{N}\sum_{j=1}^Nb(\theta_j)s_j^2\le b_{\max}Q,\\
\sigma_2&=\frac{1}{N}\sum_{j=1}^Ns_jp_j\le \frac{1}{N}\sum_{j=1}^N(s_j^2+p_j^2)/2\le Q,\\
\sigma_2^b&=\frac{1}{N}\sum_{j=1}^Nb(\theta_j)s_jp_j\le \frac{1}{N}\sum_{j=1}^Nb(\theta_j)(s_j^2+p_j^2)/2\le b_{\max}Q,        
    \end{aligned}
\end{equation*}

combining with $Q\le 1$, we have 
\begin{equation*}
\begin{aligned}
   |C(t_k)\sigma_1^b(t)|
       &\le b_{\max}Q(t) \frac{\kappa}{b_{\max}}\int_{t_{k-1}}^{t_k}e^{-\rho(t_k-\tau)}Q(\tau)\mathrm{d}\tau\\
    &\le \kappa Q(t)\int_{t_{k-1}}^{t_k}e^{-\rho(t_k-\tau)}\mathrm{d}\tau\\
     &\le \frac{\kappa Q(t)}{\rho},\\    
\end{aligned}
\end{equation*}
similarly we have $|S(t_k)\sigma_2^b(t)|\le\frac{\kappa Q(t)}{\rho}$, which implies that
\begin{equation*}
    \dfrac{\mathrm{d}}{\mathrm{d}t}\dfrac{|e_Q|}{Q} \le     \left(2M+2\frac{\kappa}{\rho}\right)\left(1+\dfrac{|e_Q|}{Q}\right),
\end{equation*}
by comparison principle we obtain the lower bound of the inter-event time as
$
    T = \frac{\log(1+\delta)}{2(M+\kappa/\rho)},$
 which completes the proof.
\end{proof}

\textcolor{black}{To proceed, we extend the results in Theorem~\ref{thm general system etc homo} to the heterogeneous networked dynamics, although this extension is not straightforward. The main difficulty is that, in the heterogeneous case, the proposed controller~\eqref{eq static law heterogeneous} involves the instantaneous sign $\mathrm{sgn}(s_j)$, whereas under an event-triggered implementation the feedback is naturally frozen between two consecutive triggering instants.} 

\textcolor{black}{To circumvent the challenge, one seemingly direct approach is to subtract $\omega_j$ in each control channel so as to cancel the heterogeneous frequencies explicitly. However, this requires exact knowledge of all intrinsic frequencies and introduces a non-vanishing oscillator-dependent bias into the control input, which is not desirable in practice. Another possible approach is to freeze the sign term at $t_k$, namely to replace $\mathrm{sgn}(s_j(t))$ by $\mathrm{sgn}(s_j(t_k))$. Yet, in order to recover the key estimate for the lower bound of $t_{k+1}-t_k$, one would need $s_j(t)$ and $s_j(t_k)$ to retain the same sign for all $t\in [t_k,t_{k+1}]$. Such a requirement is generally impossible to verify a priori, before simulating the trajectories, and is therefore overly restrictive. For this reason, we adopt in Theorem~\ref{thm general system etc heter} a \textit{hybrid event-triggered control}: the gradient-descent component is updated only at the triggering instants, while the robustness term retains the instantaneous sign information, thereby yielding a tractable extension of the event-triggered scheme to the heterogeneous case.}

\begin{thm}\label{thm general system etc heter}
We consider the collective dynamics of coupled oscillators~\eqref{eq main} under Hypothesis~\ref{ass reduction}, condition~\eqref{eq:proj-bound}, and the event mechanism in Eq.~\eqref{eq event}, where the heterogeneous frequencies are bounded by $
\max_{1\le j\le N}\omega_j-\min_{1\le j\le N}\omega_j \le \Omega,
$ as stated in Theorem~\ref{thm static heterogeneous}. The hybrid event-triggered controllers over time interval $[t_k,t_{k+1})$ are defined as, 
\begin{equation}\label{eq general control etc 1}
    \text{(Static)}~u_j(t)  = -\kappa \frac{s_j(t_k)-\mathrm{sgn}(s_j)\frac{\Omega}{2}}{b_{\max}},
\end{equation}
\begin{equation}\label{eq general control 2}
    \text{(Dynamic)}~ u_j(t)  = -\frac{1}{b_{\max}}\left(\kappa\int_{t_{k-1}}^{t_k}\mathrm{e}^{-\rho(t_k-\tau)} s_j(\tau)\mathrm{d}\tau-\mathrm{sgn}(s_j)\frac{\Omega}{2}\right),
\end{equation}

   Then the corresponding controlled system~\eqref{eq main} avoids Zeno behaviour, in which case the minimal inter-event time converges to zero~\cite{tabuada2007event}.
\end{thm}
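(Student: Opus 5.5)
I would follow the template of the proof of Theorem~\ref{thm general system etc homo}. The goal is a differential inequality for the ratio $|e_Q|/Q$, with $e_Q(t)=Q(t_k)-Q(t)$, of the form
\[
\frac{\mathrm{d}}{\mathrm{d}t}\frac{|e_Q|}{Q}\le \Bigl(\alpha+\beta\frac{|e_Q|}{Q}\Bigr)\Bigl(1+\frac{|e_Q|}{Q}\Bigr),
\]
with constants $\alpha,\beta$ independent of $k$. The comparison principle then gives a uniform lower bound on the time needed for the ratio to go from $0$ to $\delta$. The first steps are exactly those of \eqref{eq homo etc proof 0}: differentiating the ratio yields
\[
\frac{\mathrm{d}}{\mathrm{d}t}\frac{|e_Q|}{Q}\le\frac{|\dot Q|}{Q}\Bigl(1+\frac{|e_Q|}{Q}\Bigr).
\]
Using \eqref{eq Qdot est 0} and the shift $\tilde\theta_j=\theta_j-\bar\omega t$ from the proof of Theorem~\ref{thm static heterogeneous}, I may assume $|\omega_j|\le\Omega/2$.

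\textbf{Splitting $\dot Q$.} I would write $\dot Q=\frac{2}{N}\sum_j s_j\bigl(\omega_j+F_j+b(\theta_j)u_j\bigr)$. Inserting the hybrid law \eqref{eq general control etc 1}, the contribution splits into three parts.
\begin{itemize}
\item The \emph{frozen gradient part} $-\kappa\frac{b(\theta_j)}{b_{\max}}s_j(t)s_j(t_k)$ is bounded exactly as in the homogeneous case, using $|s_j|\le|R|$ and $|R(t)||R(t_k)|\le\frac{Q(t)+Q(t_k)}{2}$. This gives at most $\kappa Q+\frac{\kappa}{2}|e_Q|$ per averaged term.
\item The \emph{coupling part} is handled by \eqref{eq:proj-bound} and $s_j^2\le Q$. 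For the absolute value I would use $|s_jF_j|\le |s_j|\,\|\Gamma\|_\infty\sum_k|A_{jk}|$, or assume the two-sided version of \eqref{eq:proj-bound}, giving a term $\le c\,\frac1N\sum|s_j|$.
\item The \emph{heterogeneity plus sign part} is $\frac{2}{N}\sum_j s_j\bigl(\omega_j+\kappa\frac{b(\theta_j)}{b_{\max}}\mathrm{sgn}(s_j)\frac{\Omega}{2}\bigr)$. It is bounded in absolute value by $\frac{2}{N}\sum_j|s_j|\bigl(1+\kappa\bigr)\frac{\Omega}{2}$. Because the sign is evaluated instantaneously, no comparison between $s_j(t)$ and $s_j(t_k)$ is needed; this is precisely why the hybrid design was introduced.
\end{itemize}

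\textbf{Main obstacle.} The last part, and the linear coupling bound, scale like $|R|=\sqrt Q$ rather than $Q$. So $|\dot Q|/Q$ contains a term of order $\Omega/\sqrt Q$, which is unbounded as $Q\to0$, and the homogeneous argument does not close directly. I would handle this as follows.

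First, I would show that $Q$ stays bounded away from zero on any finite horizon. The bound $|\dot Q|\le c_1\sqrt Q+c_2Q$ gives $\frac{\mathrm{d}}{\mathrm{d}t}\sqrt Q\ge -\frac{c_1}{2}-\frac{c_2}{2}\sqrt Q$. Hence, if trigger times accumulate at some finite $t^\ast$ with $Q(t^\ast)>0$, there is a neighbourhood on which $Q\ge q_\ast>0$.

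Second, on that neighbourhood $|\dot Q|/Q\le c_1/\sqrt{q_\ast}+c_2+\frac{\kappa}{2}\frac{|e_Q|}{Q}$. The comparison ODE $\dot z=(\alpha_\ast+\beta z)(1+z)$ with
\[
\alpha_\ast=\frac{c_1}{\sqrt{q_\ast}}+c_2,\qquad \beta=\frac{\kappa}{2},
\]
then yields a positive dwell time $T_\ast=\frac{1}{\alpha_\ast-\beta}\log\frac{1+\delta}{1+\beta\delta/\alpha_\ast}$, by the same partial-fraction computation as before. This contradicts accumulation of trigger times, which excludes Zeno behaviour.

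\textbf{The degenerate case.} If instead $Q\to0$ at the accumulation point, the desynchronised target is reached in finite time. This case I would treat separately: stop triggering once $Q$ hits $0$, or note that the event condition \eqref{eq event} is ill-defined there.

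\textbf{Dynamic law.} For \eqref{eq general control 2} I would repeat the same argument. The integral term is bounded as in \eqref{eq general homo dynamic 2} by $\frac{\kappa}{2\rho}\bigl(\frac{1+\delta}{1-\delta}|e_Q|+\frac{2}{1-\delta}Q\bigr)$, while the sign term is treated as above, giving analogous constants $\alpha_\ast$ and $\beta=\frac{\kappa(1+\delta)}{\rho(1-\delta)}$.
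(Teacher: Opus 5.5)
You are right about something the paper's sketch passes over. The paper says the sign term cancels the worst-case contribution of $\sum_j s_j\omega_j$, so that the comparison system of Theorem~\ref{thm general system etc homo} applies verbatim with $Q$-independent constants. But that cancellation, $\sum_j s_j\omega_j-\frac{\Omega}{2}\sum_j|s_j|\le 0$, is one-sided, whereas the ratio estimate needs $|\dot Q|$. (As literally written, \eqref{eq general control etc 1} even makes the sign term enter $\dot Q$ as $+\kappa\frac{b(\theta_j)}{b_{\max}}\frac{\Omega}{2}|s_j|$.) So terms of size $|R|=\sqrt Q$ survive and $|\dot Q|/Q$ grows like $Q^{-1/2}$, as you say. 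Your workaround, however, does not prove the statement, and the gap is exactly the case you set aside.

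First, $T_\ast$ depends on $q_\ast$. You therefore obtain only that events cannot accumulate at a time where $Q>0$, not a uniform bound $t_{k+1}-t_k\ge T$. Second, and decisively, the case $Q(t^\ast)=0$ is not degenerate: it \emph{is} Zeno behaviour. Infinitely many events occur in $[t_0,t^\ast)$, and stopping the trigger once $Q=0$ (or declaring \eqref{eq event} ill-defined there) does not undo them.

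It is also the typical case for a sign term that really compensates. Suppose that term enters $\dot Q$ as $-\gamma_j\frac{\Omega}{2}|s_j|$ with $\gamma_j\ge\gamma>1$. Combine the one-sided bound \eqref{eq:proj-bound} (now used in the right direction), the estimate $|s_j(t)s_j(t_k)|\le|R(t)||R(t_k)|\le\sqrt{1+\delta}\,Q(t)$ before the next event, and $\frac1N\sum_j|s_j|\ge|R|\,\frac1N\sum_j\sin^2(\psi-\theta_j)$ with $\psi=\arg R$. These give $\dot Q\le-c\sqrt Q$ near $R=0$, as long as the phases are not concentrated on two antipodal points. Then $Q$ decreases, each event fires when $Q$ has dropped by the factor $1+\delta$, and
\[
\sqrt{Q(t_{k+1})}=(1+\delta)^{-1/2}\sqrt{Q(t_k)},\qquad t_{k+1}-t_k\le\frac{2}{c}\bigl(1-(1+\delta)^{-1/2}\bigr)\sqrt{Q(t_k)}.
\]
So the inter-event times decay geometrically, and the events accumulate no later than $t_0+\frac{2}{c}\sqrt{Q(t_0)}$. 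Your own inequality $\frac{\mathrm{d}}{\mathrm{d}t}\sqrt Q\ge-\frac{c_1}{2}-\frac{c_2}{2}\sqrt Q$ is consistent with this finite-time extinction rather than excluding it.

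Hence, with the purely relative trigger \eqref{eq event} and an instantaneous sign term, no estimate of the homogeneous type yields a uniform dwell time. What is missing is one of two things. Either you need a proof that $Q$ stays bounded away from zero along the whole trajectory, which is at odds with the control objective. Or you need a modified trigger, e.g.\ an absolute offset $\delta Q(t)+\epsilon$ or an enforced minimal dwell time. The paper's omitted argument does not supply this either, because it rests on the two-sided cancellation that fails.
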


\textcolor{black}{The proof of Theorem~\ref{thm general system etc heter} is analogous to that of Theorem~\ref{thm general system etc homo} and is therefore omitted. The only essential difference is that, in the heterogeneous setting, after subtracting the midpoint frequency one has $|\omega_j|\le \Omega/2$, which yields an extra term $\sum_{j=1}^N s_j\omega_j$ in  $\dot Q$ and hence in the evolution of $|e_Q|/Q$. The additional sign term in the hybrid controllers \eqref{eq general control etc 1}-\eqref{eq general control 2} is introduced exactly to cancel this worst-case frequency contribution. After this compensation, the rest of the argument is identical to that of Theorem~\ref{thm general system etc homo}, leading to the same type of comparison system and thus a strictly positive lower bound for $t_{k+1}-t_k$. We do not yet have an analogous theoretical result for the dynamic mean-field control, since a common control input cannot separately compensate for the heterogeneous intrinsic frequencies. Nevertheless, the numerical experiments still demonstrate the effectiveness of the dynamic mean-field control under the event-triggered mechanism.
}

\section{Pseudo phase reduction}\label{sec pseudo}

\begin{figure}
	\centering
    \includegraphics[width=0.8\textwidth]{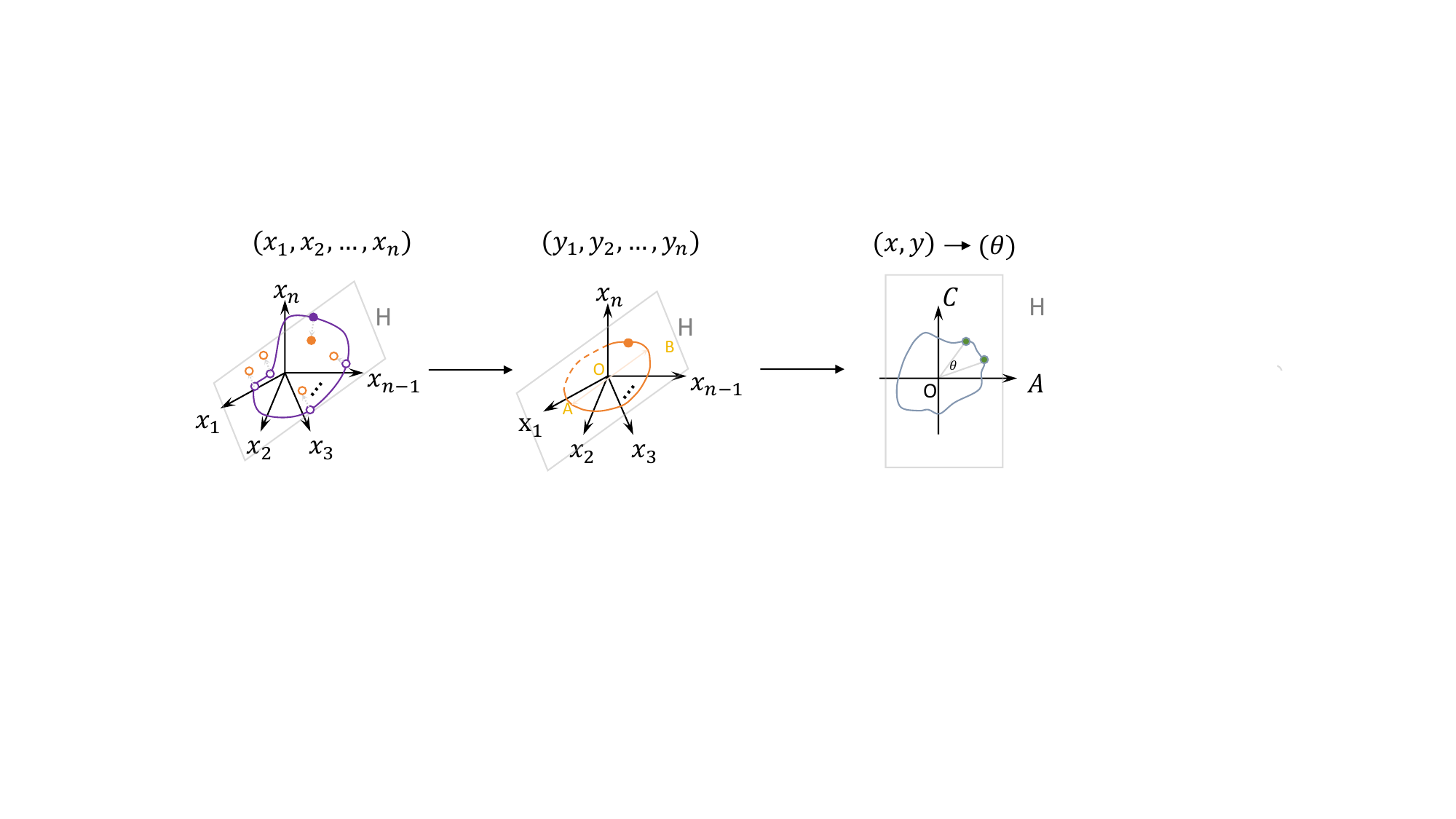}
    \caption{Illustration of the pseudo phase approach. The high-dimensional limit cycle is first projected to a hyperplane. Then we transform the projected cycle in high-dimensional hyperplane to a 2-dimensional plane by coordinate transformation. Finally we obtain the pseudo phase as the geometric phase of the 2-dimensional cycle.
     }
\vskip -0.15in
\end{figure}
While in the previous sections we established a fundamental route to desynchronise the coupled oscillations based on their phase dynamics, exact reduced phase dynamics are generally intractable in networked high-dimensional oscillators since most oscillations lack analytical expressions.  This motivates a robust, model-agnostic surrogate constructed directly from available state measurements over limit-cycles.

\begin{ass}
For the limit cycle $\Gamma\subset\mathbb{R}^d$ of the isolated oscillator dynamics $\dot{\vx}=\vf(\vx)$, we assume a dense sample set $\mathcal{X}=\{x_k\}_{k=1}^N\subset\Gamma$ is available.
\end{ass}
Our aim is to find the hyperplane $\mathbb{H}$ that is the closest to the limit cycle in the sense that the projection of the limit cycle to $\mathbb{H}$ keeps the similarity to the limit cycle as much as possible. To find such a hyperplane, we need two orthogonal vectors as horizontal and vertical axis along with a pivot point over the limit cycle. We first construct the pivot point and the horizontal axis as follows.
\begin{definition}\label{def:AB-v}
For the dense sample set $\mathcal{X}$, we define the antipodal pair of points $(A,B)$ as,
\begin{equation}\label{eq:AB}
    (A,B)\ \in\ \arg\max_{x_i,x_j\in\mathcal{X}}\|x_i-x_j\|,
\end{equation}
\end{definition}
then we further define the horizontal axis direction as $
    \vv=\frac{B-A}{\|B-A\|}\in \mathbb{S}^{n-1},$
we can select $A$ as a pivot point and $\vv$ as horizontal axis. 

To proceed, we determine the vertical axis with the Principle Component Analysis (PCA) principle. Consider planes that pass through the line $\overline{AB}$ and rotate within the family $\mathrm{span}\{v,u\}$, where $u\in v^\perp$ is a unit vector. The squared distance from $x_k$ to the plane $\Pi(v,u):=\{A+sv+tu: s,t\in\mathbb{R}\}$ equals $\|(I - vv^\top - uu^\top)(x_k-A)\|^2$. The total residual over $\mathcal{X}$ is
\begin{equation}\label{eq:residual}
\mathcal{J}(u) = \sum_{k=1}^N \|(I - vv^\top - uu^\top)(x_k-A)\|^2
 = \sum_{k=1}^N\|p_k\|^2 - \sum_{k=1}^N (u^\top p_k)^2.
\end{equation}

Let $\Sigma := \frac{1}{N}\sum_{k=1}^N p_k p_k^\top$ be the covariance on $v^\perp$. Since the first term of \eqref{eq:residual} is constant w.r.t.\ $u$, minimizing $\mathcal{J}(u)$ is equivalent to maximizing $u^\top \Sigma u$. Then we have the following theorem.
\begin{thm}
The plane $\Pi(\vv,\vp)$ that minimizes the total squared distance \eqref{eq:residual} is uniquely determined by $v$ and $\vp$, where 
$
\vp \in \arg\max_{\|\vu\|=1,  \vu\perp \vv} \vu^\top\Sigma \vu,
$
i.e., $\vp$ is the principal eigenvector of $\Sigma$ on $\vv^\perp$, then we define $\vp$ as the vertical axis associated with pivot $A$ and horizontal axis $\vv$.
\end{thm}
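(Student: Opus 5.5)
The plan is to reduce the plane-fitting problem to a Rayleigh-quotient maximisation on the subspace $\vv^\perp$ and then apply the spectral theorem there. First, fix the pivot $A$ and the unit horizontal axis $\vv$ from Definition~\ref{def:AB-v}. Set $p_k:=(I-\vv\vv^\top)(x_k-A)$, the component of $x_k-A$ orthogonal to $\vv$; these are the vectors appearing in \eqref{eq:residual}. For any unit $\vu\perp\vv$, the matrix $P=\vv\vv^\top+\vu\vu^\top$ is the orthogonal projector onto $\mathrm{span}\{\vv,\vu\}$. Hence $I-P$ is a symmetric idempotent, and
\[
\|(I-P)(x_k-A)\|^2=(x_k-A)^\top(I-\vv\vv^\top)(x_k-A)-\bigl(\vu^\top(x_k-A)\bigr)^2 .
\]
Since $\vu\perp\vv$, we have $\vu^\top(x_k-A)=\vu^\top p_k$, and $(x_k-A)^\top(I-\vv\vv^\top)(x_k-A)=\|p_k\|^2$. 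This verifies the second equality in \eqref{eq:residual}. It also gives $\mathcal J(\vu)=\sum_k\|p_k\|^2-N\,\vu^\top\Sigma\vu$, so minimising $\mathcal J$ over unit $\vu\perp\vv$ is the same as maximising $\vu^\top\Sigma\vu$ over the same set.

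Second, note that each $p_k$ lies in $\vv^\perp$, so $\Sigma\vv=0$ and $\Sigma$ maps $\vv^\perp$ into itself. The restriction $\Sigma|_{\vv^\perp}$ is therefore a symmetric positive semidefinite operator on an $(n-1)$-dimensional space, and it admits an orthonormal eigenbasis with eigenvalues $\lambda_1\ge\lambda_2\ge\dots\ge0$. Expanding a unit $\vu\in\vv^\perp$ in this basis gives $\vu^\top\Sigma\vu=\sum_i\lambda_i c_i^2\le\lambda_1$. Equality holds exactly when $\vu$ lies in the $\lambda_1$-eigenspace, so the maximiser $\vp$ is a principal eigenvector of $\Sigma$ on $\vv^\perp$. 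The optimal plane is then $\Pi(\vv,\vp)=\{A+s\vv+t\vp\}$. The plane depends only on $\mathrm{span}\{\vv,\vp\}$, so the sign ambiguity $\vp\mapsto-\vp$ does not affect it.

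The main obstacle is the word ``uniquely'', which does not hold without an extra assumption. If $\lambda_1$ is a repeated eigenvalue of $\Sigma|_{\vv^\perp}$, then every unit vector in the $\lambda_1$-eigenspace is optimal, and distinct such vectors span different planes. I would therefore add the nondegeneracy condition $\lambda_1>\lambda_2$, i.e.\ a strict spectral gap. Under this condition the top eigenspace is one-dimensional, so $\vp$ is unique up to sign and the plane is unique. Without the gap, the statement can only say that any principal eigenvector yields a minimising plane, and I would present it in that weaker form. I would also remark that the claim concerns only planes that pass through $A$ and contain $\vv$, as in the family $\Pi(\vv,\vu)$; it is not a claim about optimality among all affine planes.
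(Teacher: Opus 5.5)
Your proof is correct and follows the same route as the paper. You rewrite $\mathcal J(\vu)=\sum_k\|p_k\|^2-N\,\vu^\top\Sigma\vu$ and maximise the Rayleigh quotient over unit $\vu\perp\vv$, and you also check the projector identity and the spectral step that the paper's one-line proof leaves implicit; your caveat is right too, since the paper never proves uniqueness, and the minimising plane is unique (up to the harmless sign flip $\vp\mapsto-\vp$) exactly when the top eigenvalue of $\Sigma|_{\vv^\perp}$ is simple.
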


\noindent\emph{Proof.} For a unit $u\in v^\perp$, the residual is 
$\sum_k\|(I-vv^\top-uu^\top)(x_k-A)\|^2
=\sum_k\|p_k\|^2-\sum_k(u^\top p_k)^2$,
i.e., a constant minus a Rayleigh quotient. Maximising $u^\top\Sigma u$ yields $u_\star$. \hfill$\square$

Thus, the best-fitting plane is spanned by the chord direction $v$ and the dominant variance direction $u_\star$ of the projected cycle. Based on the aforementioned axes and pivot point, we obtain the pseudo phase as an approximation to the intractable phase as
\begin{definition}\label{def:pseudo-theta}
With the orthonormal basis $\{\vv,\vp\}$, define the relative coordinates $(y_1(\vx),y_2(\vx))$ for any $\vx\in\mathbb{R}^n$ as 
\begin{equation}\label{eq:planar-coords}
    y_1(\vx)=(\vx-A)^\top \vv, 
    y_2(\vx)=(\vx-A)^\top \vp , 
    \end{equation}
and the pseudo phase
\begin{equation}\label{eq:pseudo-theta}
    \tilde\theta(x) =\text{arg} \left(y_1(x)+\text{i}  y_2(x)\right)\in(-\pi,\pi].
\end{equation}
\end{definition}

In practice, the argument in Eq.~\eqref{eq:pseudo-theta} is calculated by the computing package \texttt{atan2} in \texttt{Python} or \texttt{Matlab}.

Now we can implement the proposed desynchronisation control laws under the event-triggered mechanism by replacing the phase $\theta$ by the pseudo phase $\tilde{\theta}$,
\begin{equation}\label{eq:order-pseudo}
    \tilde r =\ \frac{1}{N}\sum_{i=1}^N e^{\mathrm{i}\tilde\theta(x_i)},~
    Q(x) =\ |\tilde r|^2.
\end{equation}

\section{Numerical Experiments}\label{sec exp}
 In this section, several representative examples are provided to validate the usefulness of the proposed desynchronisation control laws and the obtained theoretical results. Moreover, we discuss the application scenarios and limitations of the proposed framework with a slow-fast dynamical system. 
   
    \subsection{Coupled Van der Pol oscillators} \label{sub sec vdp}
    To demonstrate the effectiveness of the proposed control strategies, we first apply both the static and dynamic control laws to a network of coupled Van der Pol (VDP) oscillators. The coupled VDP system is expressed as
 \begin{equation}\label{eq vdp main}
 \begin{aligned}
        \dot{v}_i&= w_i,\\
        \dot{w}_i &= \mu_i(1-v_i^2)w_i-v_i+\frac{K}{N}\sum_{j=1}^NA_{ij}\frac{v_j-v_c}{1+e^{-(v-v_0)/v_{th}}}+u_i,~i=1,...,N,
 \end{aligned}    
 \end{equation}
where $v_i, w_i$ denote the position and velocity of the $i$th oscillator, $\mu>0$ is the strength of damping force, $K=0.1$ is the coupling strength, $\vA = (A_{ij})$ represents the network adjacency matrix, and $u_i(t)$ denotes the external control input. In the absence of control, the system exhibits self-sustained oscillations, and strong coupling leads to a synchronous limit cycle. For the network structure, we consider three synthetic networks: Watts-Strogatz network, \text{Erd\H{o}s--R\'enyi} network, and scale free network, all these networks are generated by the \texttt{Networkx} package in \texttt{Python}. In addition, we consider two realistic brain networks: Macaque cortical connectivity network within one hemisphere~\cite{kaiser2006nonoptimal}, and C. elegans local network of 131 frontal neurons~\cite{kotter2004online}.

The objective is to realize \emph{desynchronization} of the oscillators through control laws proposed in Section~\ref{sec desyn}. Before presenting the performance of the proposed controllers, we numerically examine the validity of the bound in Eq.~\eqref{eq:proj-bound}. Specifically, we evaluate the ratio $|\sum_{j=1}^Ns_jF_j(\theta)|/\sum_{j=1}^Ns_j^2$ over a collection of sampled phase configurations. Uniform boundedness of this ratio provides numerical evidence supporting Eq.~\eqref{eq:proj-bound}. We consider two network topologies and two coupling mechanisms: the nonlinear coupling introduced in Eq.~\eqref{eq vdp main}, and the linear coupling term $K/N\sum_{j=1}^NA_{ij}v_j $, applied to $w_i$-dynamics. As shown in Figure~\ref{fig_vdp_validate}, the computed ratios remain uniformly bounded across all sampled configurations for both network topologies and coupling mechanisms. These results provide numerical support for the applicability of Eq.~\eqref{eq:proj-bound} to the systems considered below.

\begin{figure}\label{fig_vdp_validate}
	\centering
    \includegraphics[width=1.0\textwidth]{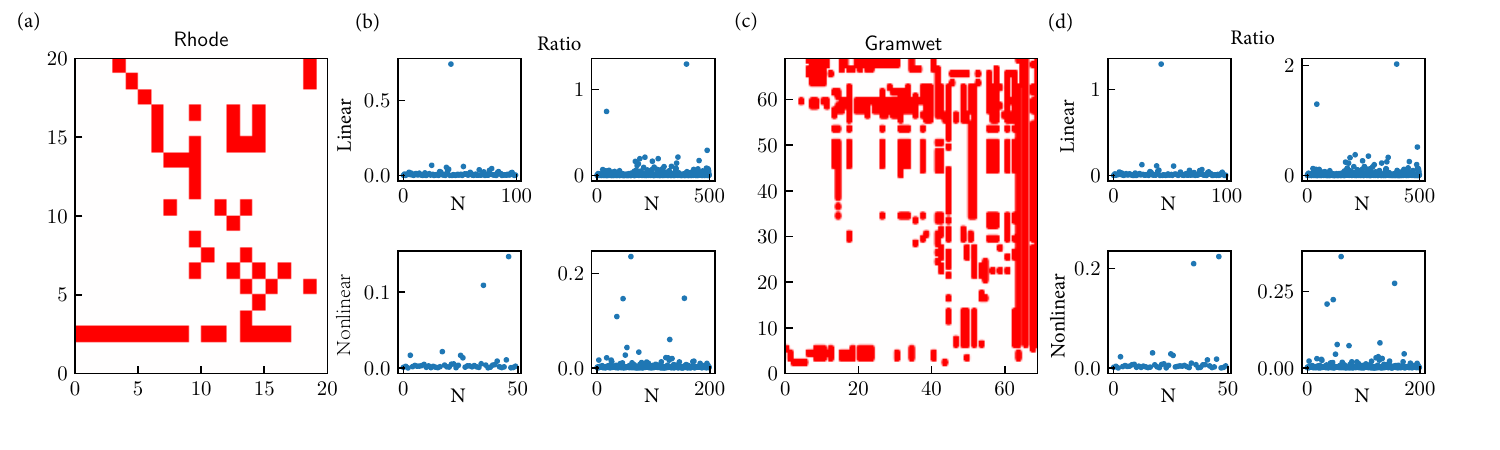}
	\vskip -0.17in
    \caption{Validation of Eq.~\eqref{eq:proj-bound} using Van der Pol oscillators. The two heatmaps (a), (c) correspond to the adjacency matrix of two real network structures: Rhode network and Gramwet network~\cite{tian2017articulation}, respectively. Red color represents the edge. The small panels in (b),(d) show the sampled ratio $|\sum_j s_jF_j|/\sum_j s_j^2$ under different coupling mechanisms. A numerical indication that Eq.~\eqref{eq:proj-bound} holds is that this ratio remains bounded and does not diverge to infinity. The first row corresponds to linear coupling, whereas the second row corresponds to nonlinear coupling in Eq.~\eqref{eq vdp main}. The horizontal axis in each small panel denotes the sample size.
     }
\vskip -0.15in
\end{figure}

We consider the homogeneous oscillations where different oscillators share the same strength of the damping $\mu_i=\mu=0.1$. The network is fixed as a Watts-Strogatz network with $N=50$.  As shown in the Figs.~\ref{fig_vdp1a},\ref{fig_vdp1b}, the pseudo phase reflects the regular and smoothly varying oscillation over the limit cycle. We respectively apply the distributed control $u_i$ and the mean-field control $u_i=\frac{u}{N}\sum_{j=1}^N\sin(\theta_j-\theta_i)$  to the controlled dynamics~\eqref{eq vdp main}, the results in figure~\ref{fig_vdp1c} show that the static and mean-field control laws quickly reduce the order parameter after the control signals are applied, while the dynamic laws need a period of warming up until the order parameter begins to decrease. The reason is that dynamic control initiated from zero value $u_i=0$, and needs some time to exceed some threshold to regulate the synchronisation state. Furthermore, we illustrate the desynchronisation effects under different control laws in Figs.~\ref{fig_vdp1d}-\ref{fig_vdp1f}.

\begin{figure}\label{fig_vdp1}
	\centering
    \subfigure{\label{fig_vdp1a}}
    \subfigure{\label{fig_vdp1b}}
    \subfigure{\label{fig_vdp1c}}
        \subfigure{\label{fig_vdp1d}}
    \subfigure{\label{fig_vdp1e}}
    \subfigure{\label{fig_vdp1f}}
    \includegraphics[width=0.75\textwidth]{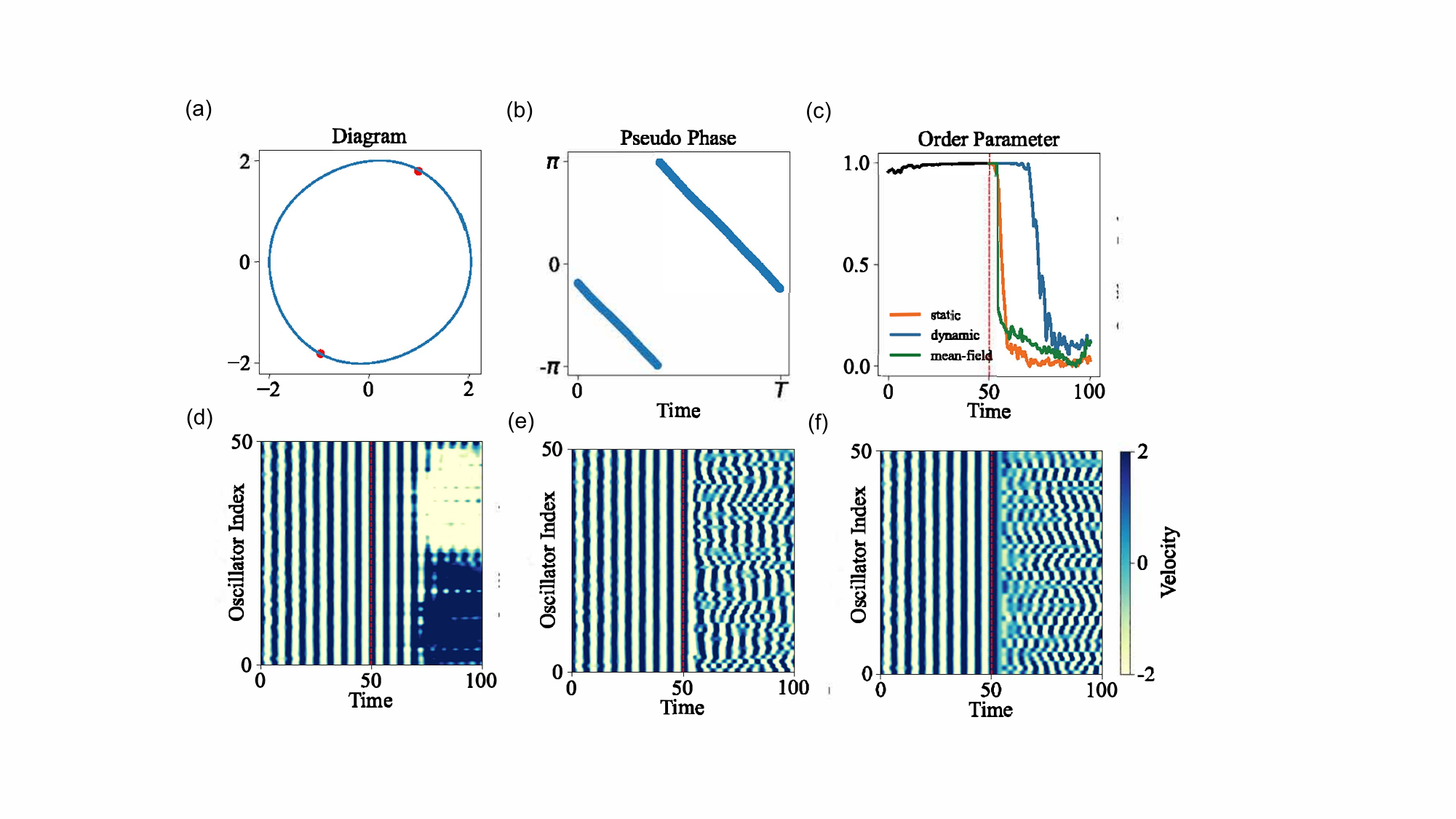}
	\vskip -0.17in
    \caption{Dynamics for system~\eqref{eq vdp main} where 50 homogeneous Van der Pol oscillators are coupled in
 a Watts-Strogatz network, before and after the control is switched
 on at $T = 50$. (a) Phase diagram of the original limit cycle in the ambient space, red points are $(A,B)$. (b) The pseudo phase uniformly varies along the limit cycle. (c) The order parameter before and after the control is applied. We consider three kinds of control: static law, dynamic law, mean-field adaptive control. (d)-(f)  Each colour represents the velocity of each oscillator before and after the control is applied.
     }
\vskip -0.15in
\end{figure}

To proceed, we consider a heterogeneous network in which the damping parameters $\mu_i$ are independently sampled from a uniform distribution $\mathcal{U}(0.03,0.17)$. The results shown in figure~\ref{fig_vdp2} demonstrate that the proposed desynchronisation algorithms remain effective in heterogeneous networks.

We further investigate the influence of network topology on the desynchronisation performance of different control laws. Figure~\ref{fig_vdp3} illustrates the topological structures of five representative networks in the top row. For each network, the desynchronisation effects of three control laws are examined under homogeneous (middle row) and heterogeneous (bottom row) damping parameters $\mu_i$, respectively. In all cases, the coupled oscillators are successfully desynchronised. In some instances, however, the order parameter does not converge exactly to zero, which can be attributed to approximation errors introduced by the pseudo-phase representation.

\begin{figure}\label{fig_vdp2}
	\centering
    \includegraphics[width=1.0\textwidth]{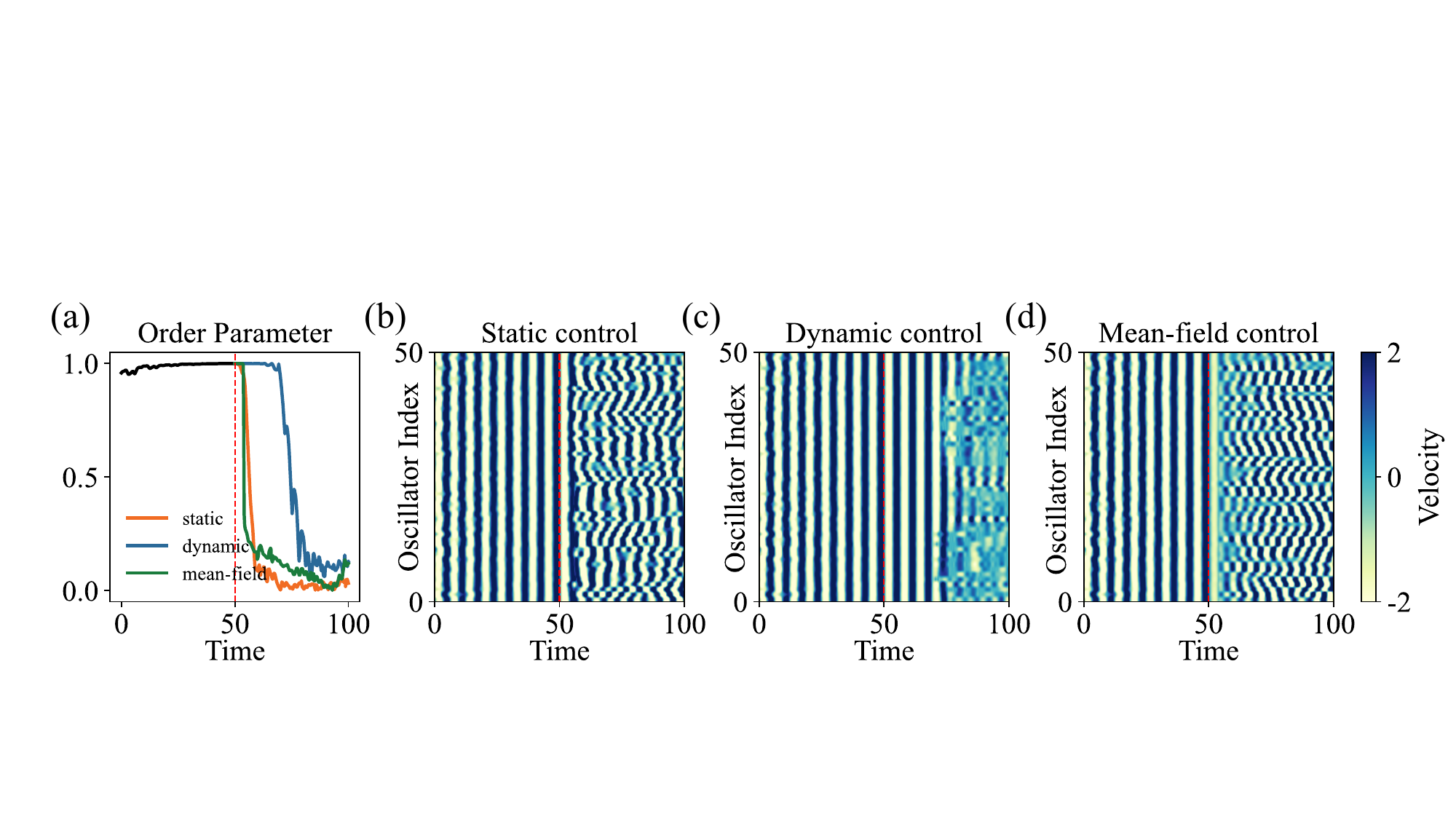}
	\vskip -0.17in
    \caption{Dynamics for system~\eqref{eq vdp main} where 50 heterogeneous Van der Pol oscillators are coupled in
 a Watts-Strogatz network, before and after the control is switched
 on at $T = 50$. (a) The order parameter before and after the control is applied. We consider three kinds of control: static law, dynamic law, mean-field adaptive control. (b)-(d)  Each colour represents the velocity of each oscillator before and after the control is applied. For the network's heterogeneity, we independently sample $50$ damping parameters $\mu_i,i=1,...,50$ from a uniform distribution $\mathcal{U}(0.03,0.17)$.
}
\vskip -0.05in
\end{figure}

\begin{figure}\label{fig_vdp3}
	\centering
    \includegraphics[width=1.0\textwidth]{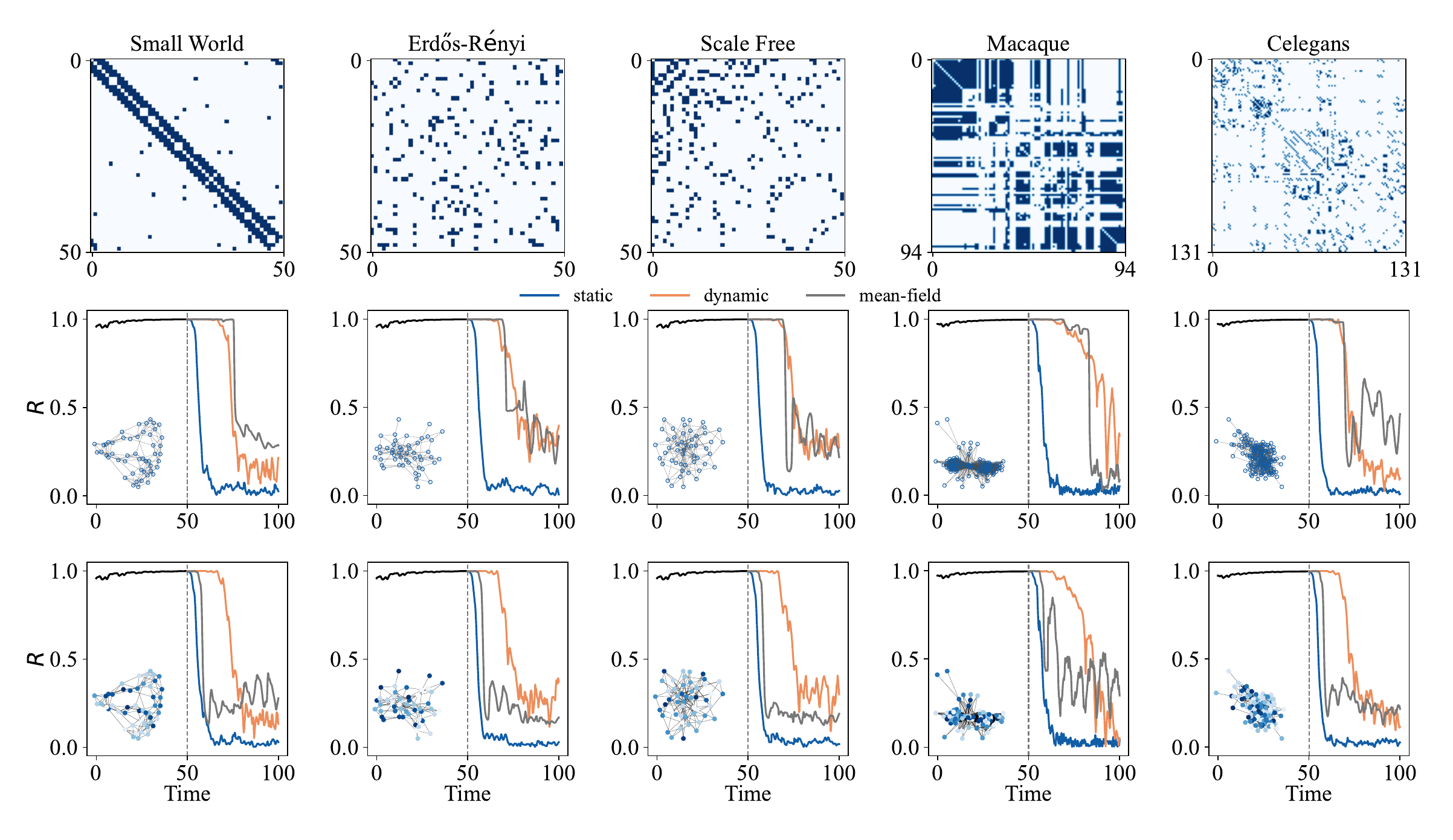}
	\vskip -0.1in
    \caption{(top row) Topological structures of five networks.  Order parameters of dynamics under different control laws for homogenous (middle row) and heterogenous (bottom row) oscillations, wherein the subplots show the network structure with colour representing the strength of the parameter $\mu_i$ in each node. The controllers are applied since time point $t=50$. For each network structure, we independently sample $N$ damping parameters $\mu_i,i=1,...,N$ from a uniform distribution $\mathcal{U}(0.03,0.17)$, where $N$ is the size of the network.
     }
\vskip -0.15in
\end{figure}

In the above numerical studies, we observe that the transient time required to drive the system from synchronisation to desynchronisation differs markedly among different control schemes. This discrepancy arises because the dynamic controllers do not exert an immediate effective influence: starting from zero initial conditions, they must first evolve into an effective region of the control space during the transient period before significantly impacting the collective dynamics. To systematically investigate how the control parameters $\rho$ and $\kappa$ in Eqs.~\eqref{eq static law heterogeneous},\eqref{eq dynamic law heterogeneous},\eqref{eq dynamic mean field final} affect the transient behaviour, we quantify the transient time as the interval between the activation of control and the first instance at which the order parameter satisfies $|R| = 0.1$, under different combinations of $\rho$ and $\kappa$. Figure~\ref{fig_vdp transient} shows that increasing the control gain $\kappa$ substantially accelerates the desynchronisation process, whereas excessively large values of $\rho$ may lead to practical failure of desynchronisation. This observation is consistent with the theoretical interpretation: the parameter $\rho$ governs the decay rate of the auxiliary control variable $u$, so overly rapid decay suppresses the effective control action, while a larger control strength $\kappa$ enhances the descent of the synchronisation energy and thus promotes faster desynchronisation.

\begin{figure}\label{fig_vdp transient}
	\centering
    \includegraphics[width=0.75\textwidth]{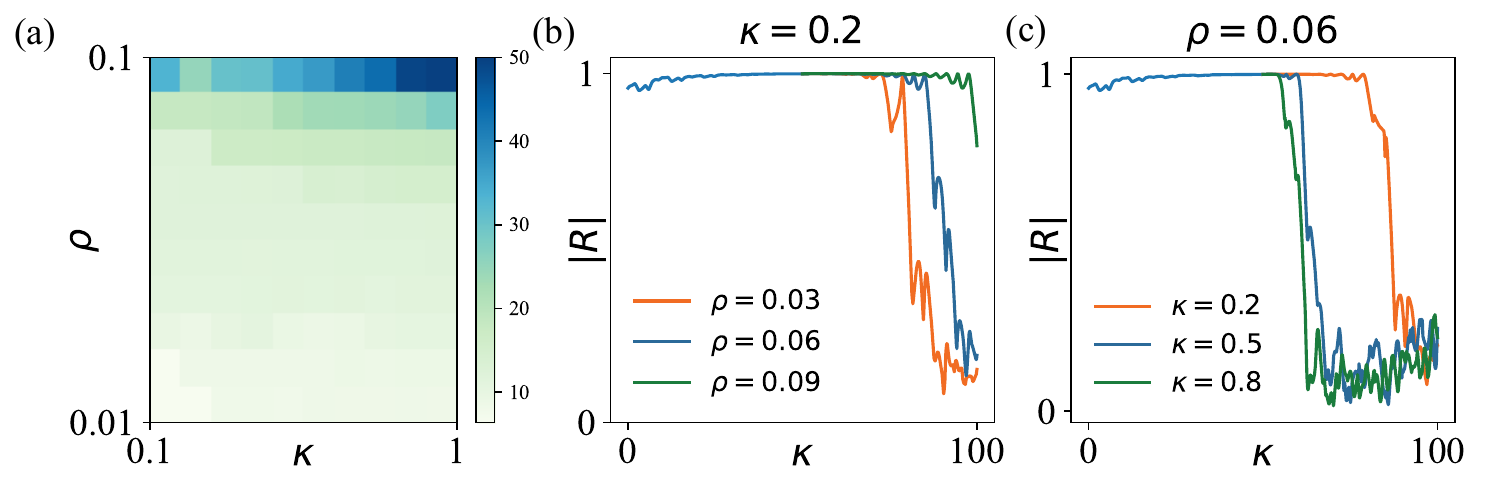}
    \caption{(a) Heatmap of the transient time under different parameters. (b) Larger $\rho$ increases the transient time from synchronisation to desynchronisation under fixed $\kappa$, (c) while larger $\kappa$ decreases the  transient time under fixed $\rho$.
     }
\vskip -0.15in
\end{figure}

To further enhance the practical applicability of the proposed control laws, we incorporate the \emph{event-triggered mechanism} introduced in Theorem~\ref{thm general system etc heter}. This mechanism substantially reduces the frequency of control updates while preserving the overall stability of the network.
For numerical validation, we consider a homogeneous oscillatory network with a Watts-Strogatz network structure, and examine both the triggering behaviour and the resulting desynchronisation performance of each control law.  The results shown in figure~\ref{fig_vdp etc} demonstrate that the event-triggered implementation achieves desynchronisation performance comparable to that of the continuous control laws (refer to figure~\ref{fig_vdp1}), while significantly reducing the intervention frequency and  communication costs.

\begin{figure}\label{fig_vdp etc}
	\centering
    
    \includegraphics[width=0.75\textwidth]{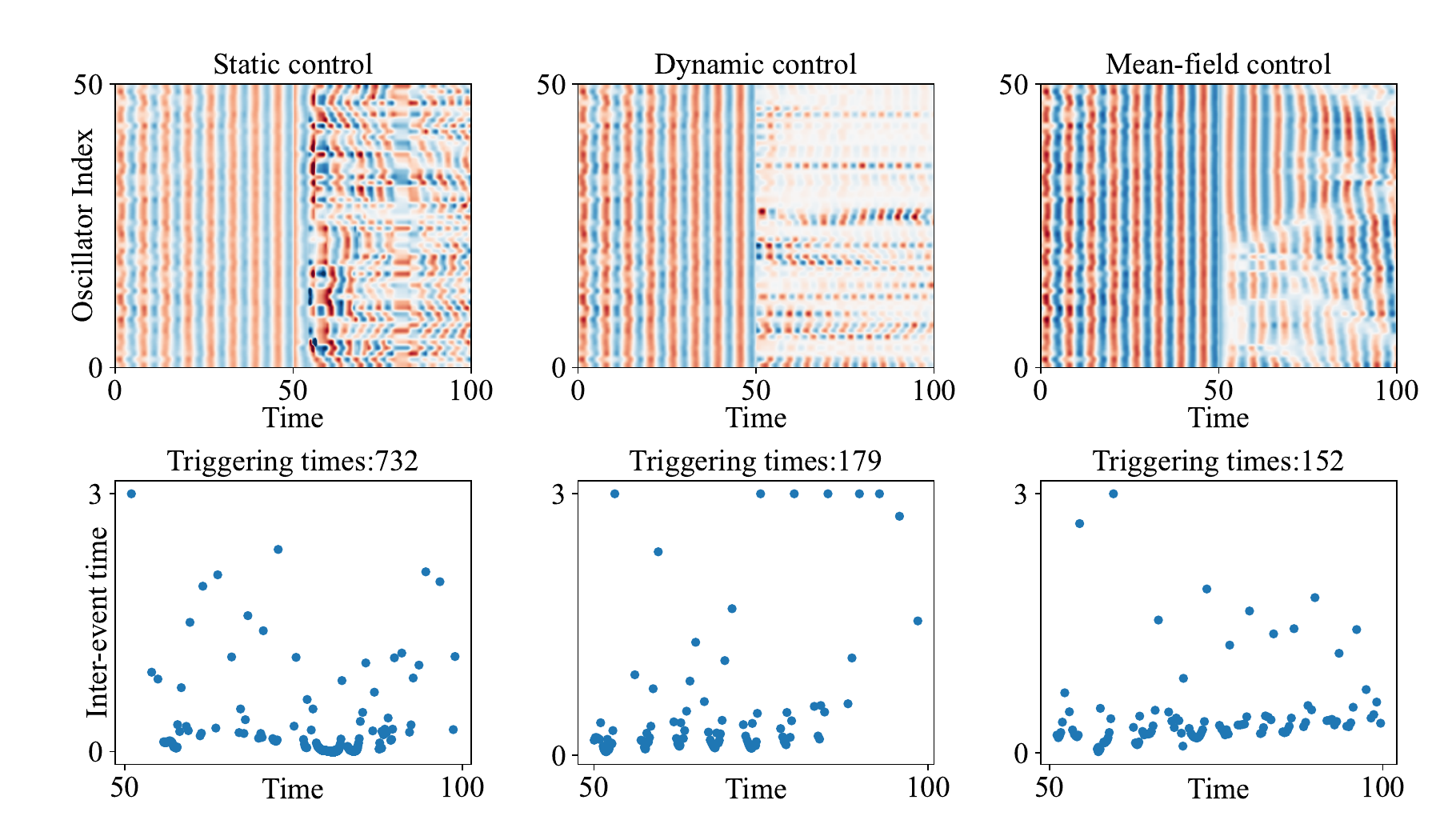}
	\vskip -0.17in
    \caption{Event-triggered desynchronisation of coupled Van der Pol oscillators using pseudo-phase feedback.
Dynamics of system (6.1) with $
N=50$ homogeneous oscillators on a Watts–Strogatz network. Control is activated at $
T=50$.
Top row: Spatiotemporal evolution of oscillator velocities under event-triggered static, dynamic, and mean-field control.
Bottom row: Inter-event intervals for each control law, demonstrating strictly positive dwell times and a substantial reduction in control updates.
     }
\vskip -0.15in
\end{figure}

    \subsection{Coupled three-dimensional oscillators} 
    In this section, we validate the effectiveness of the proposed control strategies to more complex scenarios. We consider the coupled three-dimensional Goodwin oscillators~\cite{goodwin1965oscillatory}, which is expressed as
 \begin{equation}\label{eq 3d main}
 \begin{aligned}
        \dot{x}_i&= \frac{\alpha}{1+z^n}-\beta x+K\sum_{j=1}^NA_{ij}x_j+u_i,\\
        \dot{y}_i &= \gamma x_i-\beta y_i,\\
        \dot{z}_i&=\gamma y_i-\beta z_i, i=1,...,N,
 \end{aligned}    
 \end{equation}
where $x_i$ denotes the concentration of messenger RNA (mRNA), whose production is repressed by the downstream protein $z_i$ through a Hill-type negative feedback, and $u_i$ is the external control input. The variable $y_i$ represents an intermediate protein translated from $x_i$, mediating the regulatory delay in the feedback loop. The variable $z_i$ corresponds to the final functional protein that inhibits the transcription of $x_i$ and closes the negative feedback loop, playing a central role in shaping the oscillatory and phase dynamics of the system. The nonlinear repression term with Hill coefficient $n$ introduces an effective delay in the negative feedback loop, giving rise to self-sustained oscillations through a Hopf bifurcation. Throughout the simulations, the parameters are chosen as
$
K = 0.4,~n = 9,  ~\alpha = 5.0, ~ \gamma = 2.0, ~ \beta= 1.2.$
Under these parameters, the isolated Goodwin system admits a stable and smooth limit cycle without sharp transitions or intrinsic fast--slow separation. For the network structure $(A_{ij})$, we consider the Watts-Strogatz network with $N=50$.  The uncontrolled coupled oscillators are synchronous over the limit cycle. 

\begin{figure}
	\centering
    \subfigure{\label{fig_3d1a}}
    \subfigure{\label{fig_3d1b}}
    \subfigure{\label{fig_3d1c}}
        \subfigure{\label{fig_3d1d}}
    \subfigure{\label{fig_3d1e}}
    \subfigure{\label{fig_3d1f}}
    \includegraphics[width=0.75\textwidth]{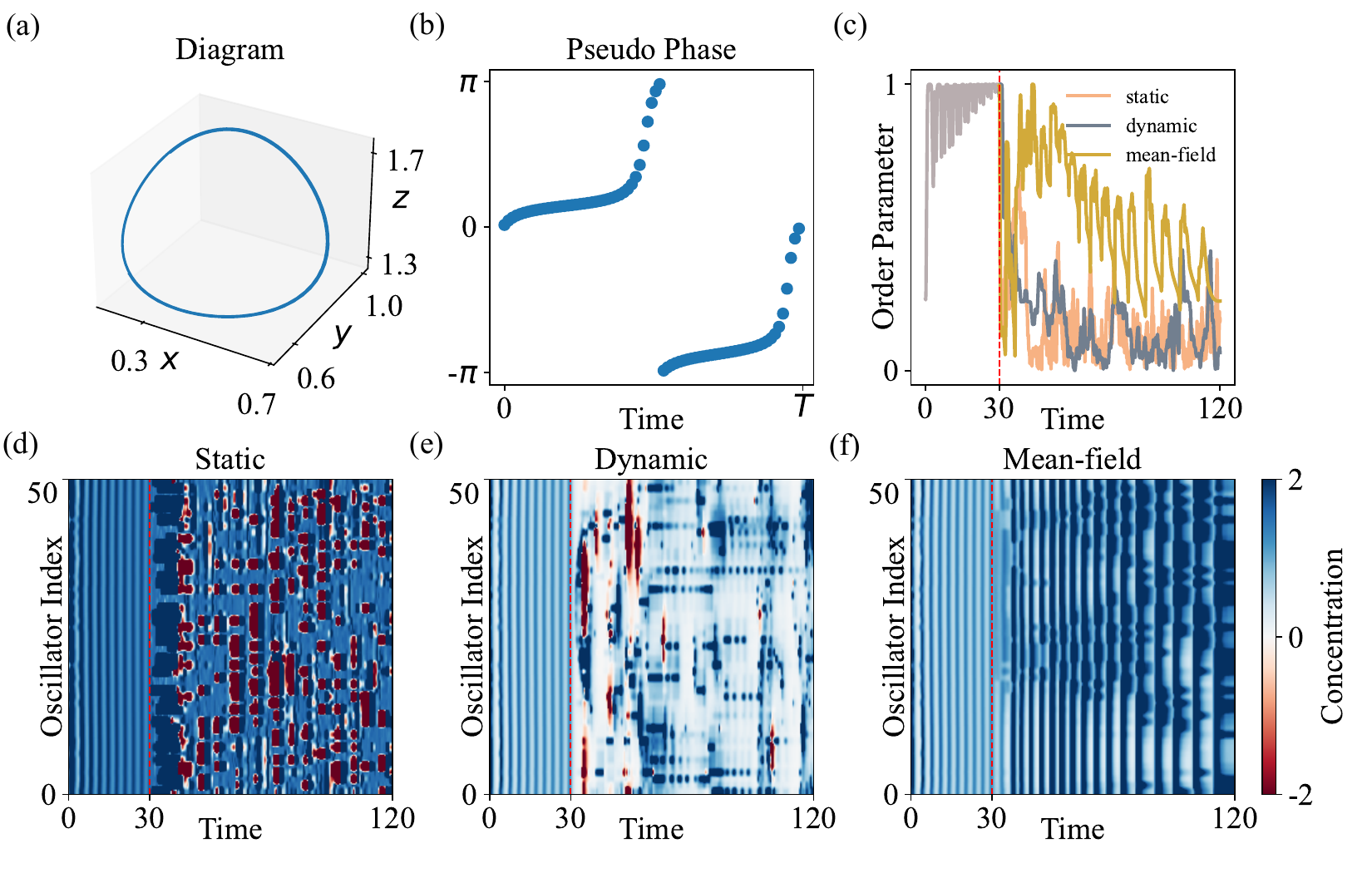}
	\vskip -0.17in
    \caption{Dynamics for system~\eqref{eq 3d main} where 50 homogeneous Goodwin oscillators are coupled in
 a Watts-Strogatz network, before and after the control is switched
 on at $T = 30$. (a) Phase diagram of the original limit cycle in the ambient space, red points are $(A,B)$. (b) The pseudo phase uniformly varies along the limit cycle. (c) The order parameter before and after the control is applied. We consider three kinds of control: static law, dynamic law, mean-field adaptive control. (d)-(f)  Each colour represents the concentration of each oscillator's mRNA before and after the control is applied.
     }
\vskip -0.15in
\label{fig_3d_homo}
\end{figure}
The geometric phase is constructed by projecting the three-dimensional trajectory onto a two-dimensional plane following the procedure introduced in Section~\ref{sec pseudo}, and defining the phase as the polar angle of the projected trajectory. Owing to the smooth geometry of the Goodwin limit cycle, the resulting pseudo phase evolves approximately uniformly along the orbit, in contrast to fast--slow spiking oscillators.

Figure~\ref{fig_3d_homo} illustrates the desynchronisation performance of the static, dynamic, and mean-field adaptive control laws applied to the coupled Goodwin oscillators. The control is activated at time $T=30$. Prior to control activation, the oscillators rapidly synchronise, as indicated by the order parameter approaching unity. After the control is switched on, all three control strategies effectively suppress synchronisation and drive the order parameter toward low values. Among them, the static and mean-field controllers induce an immediate decay of collective synchrony, while the dynamic controller exhibits a short transient period due to its internal adaptive dynamics before achieving comparable desynchronisation performance. The spatiotemporal plots further confirm that phase coherence is destroyed across the network while individual oscillations remain bounded.

To reduce communication cost, the control laws are further implemented under the event-triggered mechanism described in Section~\ref{sec etc}. Figure~\ref{fig_3d etc} presents the corresponding event-triggered desynchronisation results. The top panels show the time evolution of the global order parameter under static, dynamic, and mean-field control, while the bottom panels depict the inter-event intervals associated with each control strategy. In all cases, the event-triggered controllers achieve desynchronisation performance comparable to the continuous-time implementations, while substantially reducing the number of control updates. Moreover, the inter-event times remain finite and strictly positive throughout the simulations, thereby excluding Zeno behaviour.

Together, these results demonstrate that the Goodwin oscillator provides a representative three-dimensional limit-cycle system for which geometric phase-based control remains effective. The absence of pronounced fast--slow structure allows the projection-based phase to serve as a reliable surrogate for phase feedback, in contrast to spiking neuronal models. This example complements the two-dimensional experiments and highlights a regime in which geometric phase constructions are sufficient, thereby motivating the development of more general dynamical phase formulations for fast--slow oscillatory systems.

\begin{figure}
	\centering
    \subfigure{\label{fig_3d1a}}
    \subfigure{\label{fig_3d1b}}
    \subfigure{\label{fig_3d1c}}
        \subfigure{\label{fig_3d1d}}
    \subfigure{\label{fig_3d1e}}
    \subfigure{\label{fig_3d1f}}
    \includegraphics[width=0.75\textwidth]{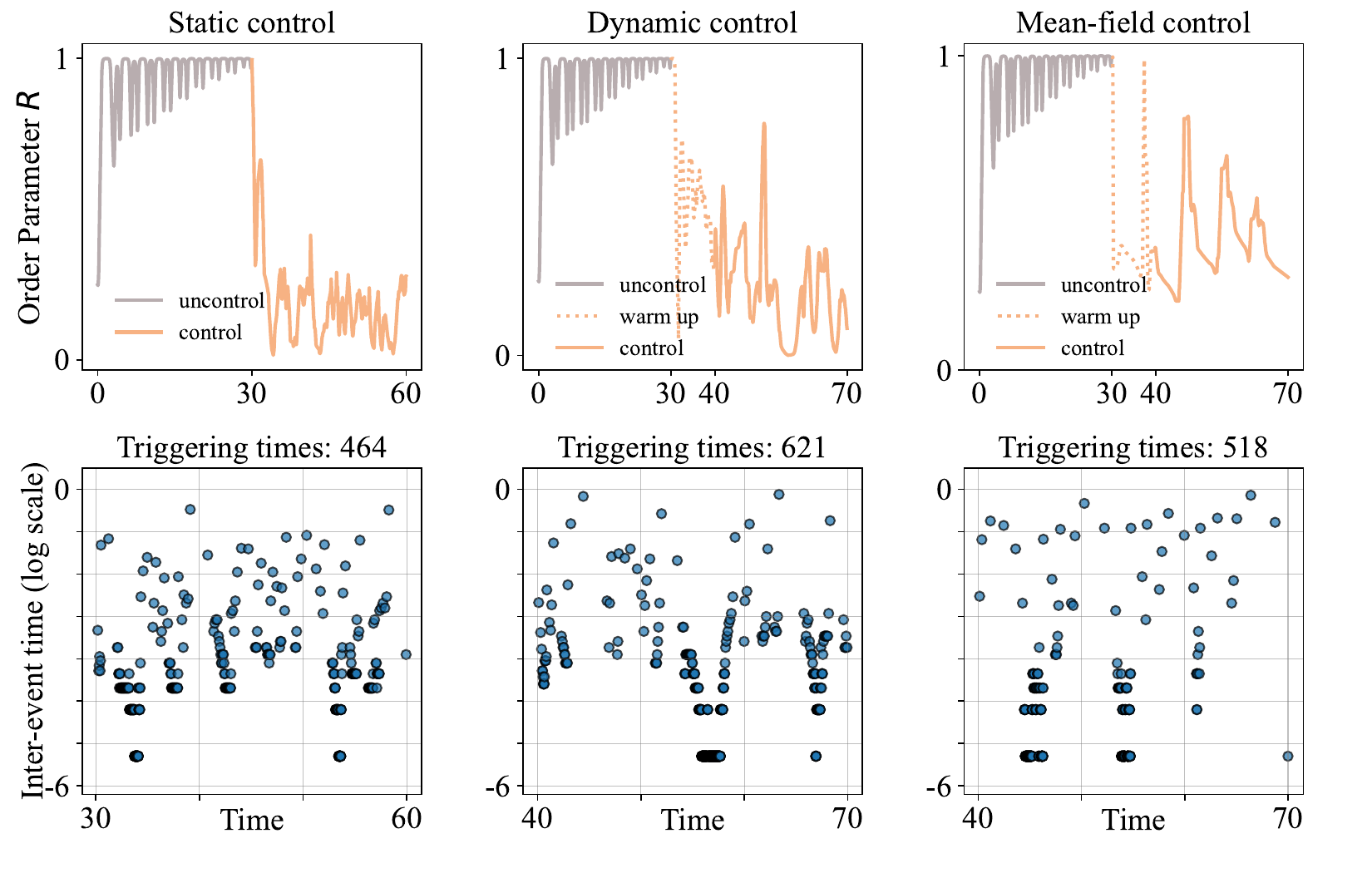}
	\vskip -0.17in
    \caption{Event-triggered desynchronisation of coupled three-dimensional Goodwin oscillators. Top row: time evolution of the global order parameter under event-triggered implementations of static, dynamic, and mean-field control laws, where control is activated at $T=30$.
Bottom row: inter-event intervals associated with each control strategy, showing strictly positive triggering times and thereby excluding Zeno behaviour.
}
\vskip -0.15in
\label{fig_3d etc}
\end{figure}

\subsection{Partial control on spatially structured brain networks}

We finally assess the performance of the proposed desynchronisation framework on realistic brain networks with explicit spatial embedding and heterogeneous connectivity. In contrast to the fully controlled synthetic networks studied above, we now consider \emph{partial control} scenarios, in which control inputs are applied only to a subset of nodes, while the remaining nodes evolve autonomously under network interactions. This setting is motivated by practical constraints in neuromodulation, such as deep brain stimulation (DBS), where stimulation can be delivered only to a limited number of anatomical regions.

Here we consider the controlled Van der Pol oscillators distributed on two experimentally derived brain networks mentioned in Section~\ref{sub sec vdp}: a macaque cortical connectivity network within a single hemisphere, consisting of 94 cortical regions connected by weighted long-range projections, and a local neuronal network of \emph{C.~elegans}, comprising 131 frontal neurons with anatomically identified synaptic connectivity. Both networks exhibit strong heterogeneity in their degree distributions and nontrivial spatial organisation, making them representative test cases for large-scale biological networks. The mathematical setting are the same as that in  Section~\ref{sub sec vdp}.

To model spatially constrained actuation, control-node selection is formulated as a two-stage process that decouples anatomical localization from topological influence. Nodes are first grouped into spatially continuous clusters based solely on their Euclidean coordinates, defining anatomically admissible regions for control. Topological properties are then evaluated within each cluster, and control is selectively applied to structurally influential nodes within a hub-enriched region. This procedure yields compact control sets that are spatially localized yet exert disproportionate influence on network dynamics. 

We consider both the homogeneous and heterogeneous oscillations under three proposed control laws. The resulting desynchronisation dynamics under partial control are shown in figure~\ref{fig_partial} for both networks. Despite the fact that only a fraction of nodes are directly actuated, the global order parameter decreases rapidly following control activation, indicating effective suppression of collective synchrony. Importantly, desynchronisation is not confined to the controlled subset: spatiotemporal visualisations reveal that phase coherence is progressively disrupted throughout the network, including the uncontrolled nodes, via coupling-mediated propagation. These results demonstrate that targeting structurally influential nodes is sufficient to induce network-wide desynchronisation in spatially heterogeneous brain networks.

To further reduce actuation and communication costs, we implement the partial control strategy under the event-triggered mechanism to the homogeneous \emph{C.~elegans} neuronal network. Figure~\ref{fig_partial_etc} shows the corresponding order parameter evolution, and the distribution of the triggering times under each kind of controller. As in the fully controlled cases, the event-triggered implementation achieves desynchronisation performance comparable to continuous-time control, while substantially reducing the number of control updates. The inter-event times remain strictly positive throughout the simulations, confirming the exclusion of Zeno behaviour. Importantly, effective desynchronisation is maintained even though control actions are sparse in time and limited to a subset of nodes.

These results highlight the effectiveness of geometric phase-based partial control on realistic brain networks and provide a conceptual link to DBS applications. In clinical settings, stimulation is typically delivered to a small number of nuclei, yet therapeutic effects emerge at the level of distributed brain networks~\cite{wang2023desynchronizing}. The present results demonstrate that, by exploiting network structure and phase interactions, desynchronisation of large-scale oscillatory activity can be achieved without direct control of all nodes.

\begin{figure}
	\centering
    
    \includegraphics[width=0.75\textwidth]{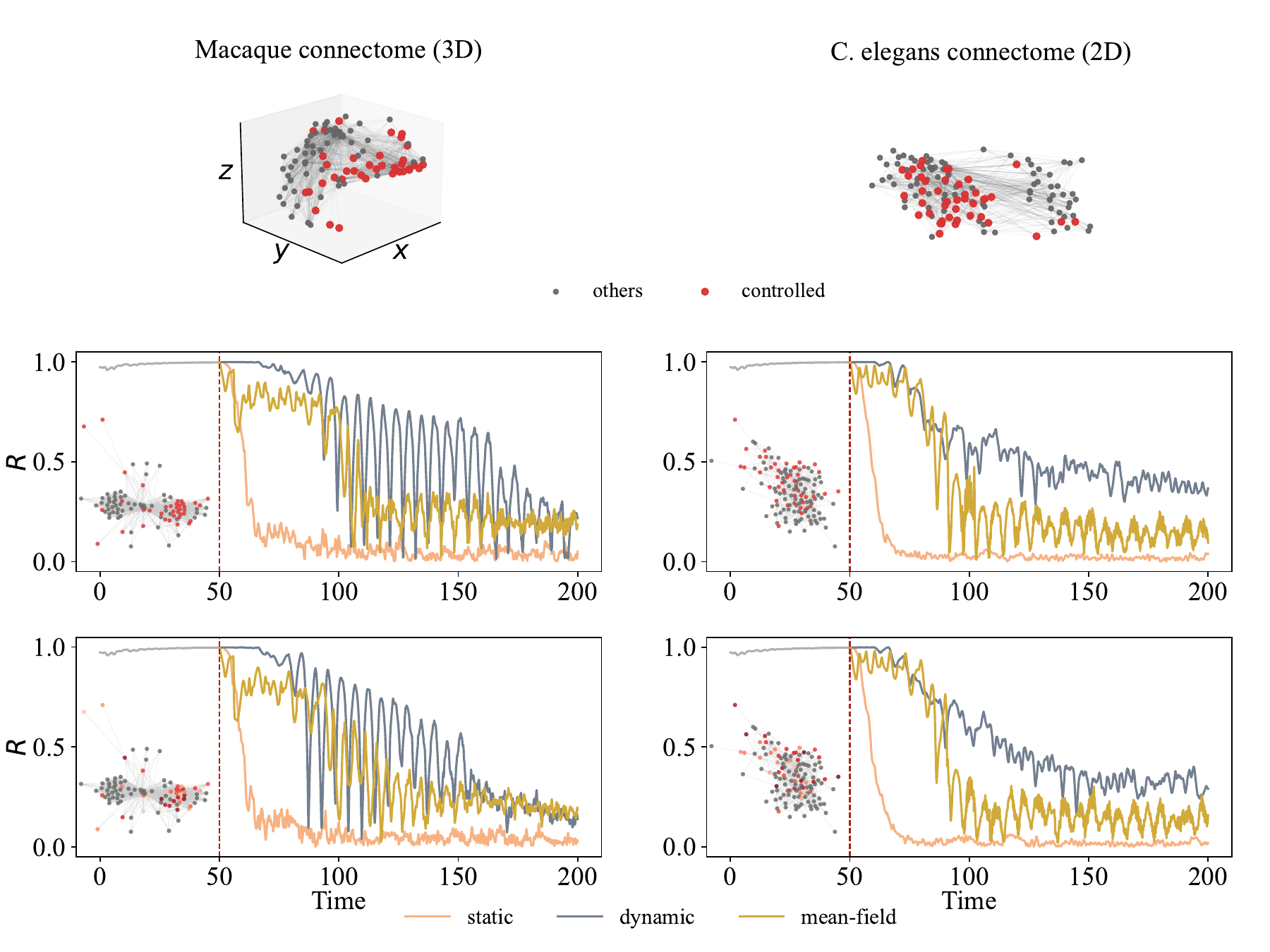}
	\vskip -0.17in
    \caption{Desynchronisation of coupled Van der Pol oscillators under partial control. Top row: Illustration of controlled nodes in realistic brain networks. Desynchronisation results of dynamics under different control laws for homogenous (middle row) and heterogenous (bottom row) oscillations, wherein the subplots show the network structure with red colour representing the strength of the parameter $\mu_i$ in each controlled node. The controllers are applied since time point $t=50$.
     }
\vskip -0.15in
\label{fig_partial}
\end{figure}

\begin{figure}
	\centering
    
    \includegraphics[width=0.75\textwidth]{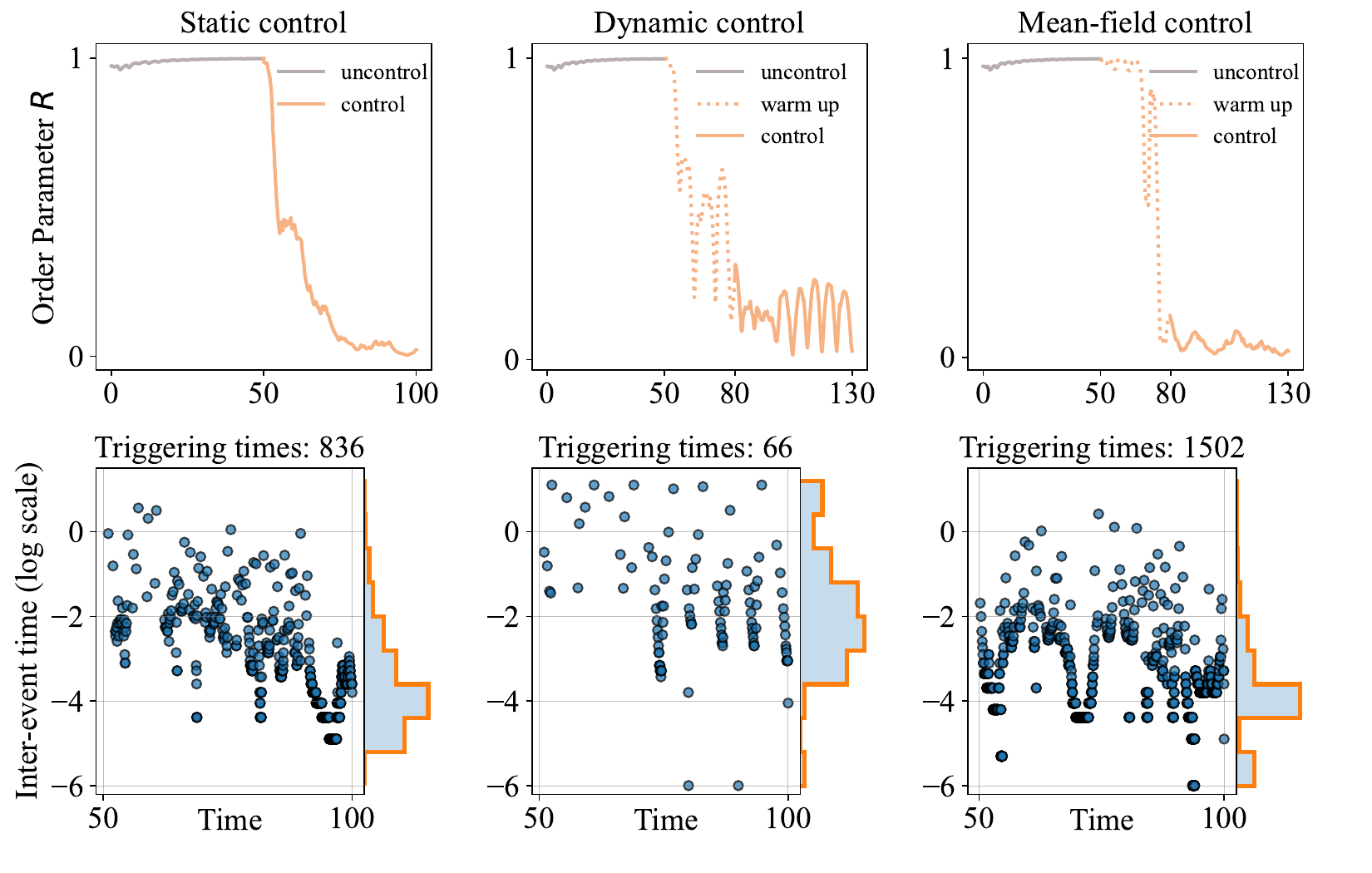}
	\vskip -0.17in
    \caption{Event-triggered desynchronisation of coupled Van der Pol oscillators distributed over a homogeneous \emph{C.~elegans} neuronal network. Top row: time evolution of the global order parameter under event-triggered implementations of static, dynamic, and mean-field control laws, where control is activated at $T=50$.
Bottom row: In each panel the left subfigure shows the inter-event intervals associated with each control strategy, and the right subfigure summarises the distribution of the inter-event intervals.
}
\vskip -0.15in
\label{fig_partial_etc}
\end{figure}

  \section{Conclusion}\label{sec conclusion}
 This paper developed a general event-triggered control framework for stabilising desynchronisation in coupled limit-cycle oscillators. By formulating desynchronisation as the zero point of synchronisation energy, we constructed static control with rigorous stability guarantees, as well as dynamic and mean-field control laws for flexibility. We further incorporated an event-triggered mechanism to reduce communication cost while excluding Zeno behaviour via explicit dwell-time bounds. To address the intractability of exact phase reduction, we introduced a projection-based pseudo-phase construction that enables order-parameter feedback directly from state measurements.

Numerical experiments on synthetic networks and spatially structured brain networks demonstrate the robustness of the proposed approach even using partial control. These results provide a theoretically grounded and practically implementable route toward sparse desynchronisation control in large-scale oscillatory systems in DBS scenarios. Future work will extend the framework to fast–slow dynamics by integrating machine learning based phase reduction methods~\cite{zhang2026ptolemy} and control methods~\cite{zhangfessnc}, and extend the underlying dynamcis to systems with stochastic perturbations and time delays~\cite{zhang2023sync}.

\clearpage



\bibliographystyle{siamplain}
\bibliography{references}
\end{document}